\newtheorem{theorem}{Theorem}[section]
\theoremstyle{plain}
\newtheorem{corollary}{Corollary}[section]
\newtheorem{lemma}{Lemma}[section]
\numberwithin{equation}{section}
\begin{document}

 \title[Some companions of perturbed Ostrowski type inequalities]{Some companions of perturbed Ostrowski type  inequalities for functions whose second derivatives are bounded and applications}
\author[W. J. Liu]{Wenjun Liu}
\address[W. J. Liu]{College of Mathematics and Statistics\\
Nanjing University of Information Science and Technology \\
Nanjing 210044, China}
\email{wjliu@nuist.edu.cn}

\author[Y. T. Zhu]{Yiting Zhu}
\address[Y. T. Zhu]{College of Mathematics and Statistics\\
Nanjing University of Information Science and Technology \\
Nanjing 210044, China}


 \subjclass[2010]{26D15, 41A55, 41A80, 65C50}
\keywords{perturbed Ostrowski type inequality; differentiable mapping;  composite quadrature rule; probability density function}

\begin{abstract}
In this paper we establish some companions of perturbed Ostrowski type integral inequalities for functions whose second derivatives are bounded. Some applications to composite quadrature rules, and to probability density functions are also given.
\end{abstract}

\maketitle

\section{Introduction}
In 1938, Ostrowski \cite{o1938} established the following interesting integral
inequality  for differentiable mappings with bounded
derivatives:
\begin{theorem}\label{Th1.1}
Let $f:[a,b]\rightarrow\mathbb{R}$ be a differentiable mapping on
$(a,b)$ whose derivative is bounded on $(a,b)$ and denote
$\|f'\|_{\infty}=\displaystyle{\sup_{t\in(a,b)}}|f'(t)|<\infty$.
Then for all $x\in[a,b]$ we have
\begin{equation}\label{1.1}
\left|f(x)-\frac{1}{b-a}\int_{a}^{b}f(t)dt\right|\leq\left[\frac{1}{4}
+\frac{(x-\frac{a+b}{2})^{2}}{(b-a)^{2}}\right](b-a)\|f'\|_{\infty}.
\end{equation}
The constant $\frac{1}{4}$ is sharp in the sense that it can not be
replaced by a smaller one.
\end{theorem}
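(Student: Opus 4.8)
The plan is to express the deviation $f(x)-\frac{1}{b-a}\int_a^b f(t)\,dt$ through a Peano-kernel identity and then bound the resulting integral by $\|f'\|_{\infty}$. First I would introduce the kernel $p(x,t)=t-a$ for $a\le t\le x$ and $p(x,t)=t-b$ for $x<t\le b$, and evaluate $\int_a^b p(x,t)f'(t)\,dt$ by splitting the integral at $x$ and integrating by parts on each subinterval. On $[a,x]$ the boundary term $(t-a)f(t)$ contributes $(x-a)f(x)$ at the upper limit and nothing at $t=a$; on $[x,b]$ the term $(t-b)f(t)$ contributes $(b-x)f(x)$ at the lower limit and nothing at $t=b$. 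Adding these and combining the two integrals of $f$ produces the identity
\begin{equation*}
f(x)-\frac{1}{b-a}\int_a^b f(t)\,dt=\frac{1}{b-a}\int_a^b p(x,t)f'(t)\,dt,
\end{equation*}
which is the analytic core of the argument.

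Next I would estimate the right-hand side. Taking absolute values and using $|f'(t)|\le\|f'\|_{\infty}$ gives the bound $\frac{\|f'\|_{\infty}}{b-a}\int_a^b|p(x,t)|\,dt$. Because $t-a\ge 0$ on $[a,x]$ and $t-b\le 0$ on $(x,b]$, we have $|p(x,t)|=t-a$ and $|p(x,t)|=b-t$ on the two pieces respectively, so the kernel integral evaluates by elementary calculus to $\frac{(x-a)^2+(b-x)^2}{2}$. It then remains to reconcile $\frac{(x-a)^2+(b-x)^2}{2(b-a)}$ with the stated bound, which follows from the algebraic identity $(x-a)^2+(b-x)^2=2\bigl(x-\frac{a+b}{2}\bigr)^2+\frac{(b-a)^2}{2}$; dividing by $2(b-a)$ reproduces $\bigl[\frac14+\frac{(x-(a+b)/2)^2}{(b-a)^2}\bigr](b-a)$ exactly.

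For sharpness I would specialize to the midpoint $x=\frac{a+b}{2}$, where the bracket collapses to $\frac14$. Testing with $f(t)=\bigl|t-\frac{a+b}{2}\bigr|$ (or a smooth approximant, since the hypotheses demand differentiability) gives $f(x)=0$, $\frac{1}{b-a}\int_a^b f(t)\,dt=\frac{b-a}{4}$, and $\|f'\|_{\infty}=1$, so the left side equals $\frac{b-a}{4}$ and no constant smaller than $\frac14$ can hold. I expect the only genuine bookkeeping to lie in the integration by parts and the sign analysis of $p(x,t)$; the sharpness step carries the mild technical caveat of smoothing the nondifferentiable extremal function so that it strictly meets the hypotheses of the theorem.
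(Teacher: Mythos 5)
Your argument is correct and is the standard Montgomery-identity proof of Ostrowski's inequality: the paper itself states Theorem \ref{Th1.1} without proof (it is quoted from Ostrowski's original work), so there is nothing in the text to diverge from, and your kernel $p(x,t)$, the integration by parts, and the identity $(x-a)^2+(b-x)^2=2\bigl(x-\tfrac{a+b}{2}\bigr)^2+\tfrac{(b-a)^2}{2}$ all check out. On the sharpness step, your midpoint witness $f(t)=\bigl|t-\tfrac{a+b}{2}\bigr|$ does require the smoothing you mention; if you want to avoid that caveat entirely, note that $f(t)=t$ with $x=a$ already gives equality ($\bigl|a-\tfrac{a+b}{2}\bigr|=\tfrac{b-a}{2}=\bigl[\tfrac14+\tfrac14\bigr](b-a)$) using a genuinely differentiable function, which forces the constant $\tfrac14$ with no approximation argument.
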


In \cite{gs2002}, Guessab and Schmeisser proved  the following companion of
Ostrowski's inequality:
\begin{theorem}\label{Th1.2}
Let $f:[a,b]\rightarrow\mathbb{R}$ be satisfying the Lipschitz condition, i.e., $|f(t)-f(s)|\le M |t-s|$.
Then for all $x\in[a,\frac{a+b}{2}]$ we have
\begin{equation}\label{1.2}
\left|\frac{f(x)+f(a+b-x)}{2}-\frac{1}{b-a}\int_{a}^{b}f(t)dt\right|\leq\left[\frac{1}{8}+2\left(\frac{x-\frac{3a+b}{4}}{b-a}\right)^{2}\right]
(b-a) M.
\end{equation}
The constant $\frac{1}{8}$ is sharp in the sense that it can not be
replaced by a smaller one.  In \eqref{1.2}, the  point
 $x=\frac{3a+b}{4}$ gives the best estimator. 
\end{theorem}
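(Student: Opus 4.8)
The plan is to establish \eqref{1.2} by a Peano-kernel (Montgomery-type) argument, which simultaneously yields the inequality, identifies the optimal node, and sets up the sharpness discussion. Since $x\in[a,\frac{a+b}{2}]$ forces the ordering $a\le x\le\frac{a+b}{2}\le a+b-x\le b$, I would introduce the kernel
\[
K(x,t)=\begin{cases} t-a, & t\in[a,x],\\[2pt] t-\frac{a+b}{2}, & t\in(x,a+b-x),\\[2pt] t-b, & t\in[a+b-x,b],\end{cases}
\]
and integrate $K(x,t)f'(t)$ piecewise by parts. A direct check shows the boundary contributions collapse to $\frac{b-a}{2}\,[f(x)+f(a+b-x)]$ while the three integrals of $f$ reassemble into $\int_a^b f$, giving the identity
\[
\frac{f(x)+f(a+b-x)}{2}-\frac{1}{b-a}\int_a^b f(t)\,dt=\frac{1}{b-a}\int_a^b K(x,t)f'(t)\,dt .
\]

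The first point to justify is that this identity holds under the mere Lipschitz hypothesis. Because a Lipschitz function is absolutely continuous, $f'$ exists almost everywhere, satisfies $|f'|\le M$ a.e., and the fundamental theorem of calculus underpinning the integration by parts remains valid even though $f$ is not assumed differentiable everywhere. Taking absolute values and using $|f'|\le M$ then gives
\[
\left|\frac{f(x)+f(a+b-x)}{2}-\frac{1}{b-a}\int_a^b f(t)\,dt\right|\le\frac{M}{b-a}\int_a^b|K(x,t)|\,dt .
\]

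It remains to evaluate $\int_a^b|K(x,t)|\,dt$ and to simplify. Each of the two outer pieces contributes $\frac{(x-a)^2}{2}$, and since $\frac{a+b}{2}$ lies in $[x,a+b-x]$ the middle piece contributes $\left(\frac{a+b}{2}-x\right)^2$, so the integral equals $(x-a)^2+\left(\frac{a+b}{2}-x\right)^2$. Setting $u=x-\frac{3a+b}{4}$ and using $x-a=u+\frac{b-a}{4}$ and $\frac{a+b}{2}-x=\frac{b-a}{4}-u$, this collapses to $2u^2+\frac{(b-a)^2}{8}$, which is precisely $\left[\frac{1}{8}+2\left(\frac{x-(3a+b)/4}{b-a}\right)^2\right](b-a)^2$; dividing by $b-a$ yields \eqref{1.2}. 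The same expression, being an increasing function of $u^2$, is minimized exactly at $u=0$, i.e.\ at $x=\frac{3a+b}{4}$, which proves the best-estimator claim.

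The part I expect to be the main obstacle is the sharpness of $\frac{1}{8}$ at $x=\frac{3a+b}{4}$. Here equality in the estimate forces $f'(t)=M\,\operatorname{sgn}K(x,t)$ almost everywhere, i.e.\ a piecewise-linear ``zigzag'' of slope $\pm M$ whose sign reverses exactly at the nodes $\frac{3a+b}{4}$, $\frac{a+b}{2}$, $\frac{a+3b}{4}$. I would construct this $f$ explicitly, verify that it is Lipschitz with constant $M$, and check that it turns every inequality above into an equality; this extremal construction, rather than the routine algebra, is the delicate step.
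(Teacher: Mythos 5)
This theorem is not proved in the paper at all: it is quoted verbatim from Guessab and Schmeisser \cite{gs2002} as background in the introduction, so there is no ``paper's own proof'' to compare against. Judged on its own terms, your argument is correct and is the standard Peano--kernel route (indeed it is the first-order analogue of the kernel \eqref{2.1} that the paper itself uses in Lemma \ref{lem2.1}). The identity checks out: integrating $K(x,\cdot)f'$ by parts on the three pieces gives coefficients $(x-a)+(\tfrac{a+b}{2}-x)=\tfrac{b-a}{2}$ on each of $f(x)$ and $f(a+b-x)$, minus $\int_a^b f$; Lipschitz continuity does imply absolute continuity with $|f'|\le M$ a.e., so the passage to $\frac{M}{b-a}\int_a^b|K(x,t)|\,dt$ is legitimate; and the evaluation $\int_a^b|K(x,t)|\,dt=(x-a)^2+(\tfrac{a+b}{2}-x)^2=2u^2+\tfrac{(b-a)^2}{8}$ with $u=x-\tfrac{3a+b}{4}$ is exactly the stated bound, with the minimum at $u=0$ giving the best-estimator claim.

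The only part you leave as a sketch is the sharpness of $\tfrac18$, and you correctly identify it as the delicate step. For the record, your proposed extremal function does work: at $x=\tfrac{3a+b}{4}$ the kernel changes sign precisely at $\tfrac{3a+b}{4}$, $\tfrac{a+b}{2}$, $\tfrac{a+3b}{4}$, and the zigzag with $f'=M\operatorname{sgn}K$ (e.g.\ on $[0,1]$ with $M=1$: slope $+1,-1,+1,-1$ on the four quarters starting from $f(0)=0$) gives $\tfrac{f(x)+f(a+b-x)}{2}=\tfrac14$, $\int_0^1 f=\tfrac18$, hence left-hand side $\tfrac18$, matching the right-hand side since the quadratic term vanishes there. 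So no smaller constant can replace $\tfrac18$. To turn the proposal into a complete proof you would just need to write out this verification rather than promise it.
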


Motivated by \cite{gs2002}, Dragomir \cite{d2005} proved some companions of Ostrowski's
inequality, as follows:
\begin{theorem}\label{Th1.3}
Let $f:[a,b]\rightarrow\mathbb{R}$ be an absolutely continuous
mapping on $[a,b]$. Then the following inequalities
\begin{align*}
\left|\frac{f(x)+f(a+b-x)}{2}-\frac{1}{b-a}\int_{a}^{b}f(t)dt\right|
\leq\left\{
{\begin{array}{l}\left[\frac{1}{8}+2\left(\frac{x-\frac{3a+b}{4}}{b-a}\right)^{2}\right]
(b-a)\|f'\|_{\infty},\quad f'\in L^{\infty}[a,b], \\
\frac{2^{1/q}}{(q+1)^{1/q}}\left[\left(\frac{x-a}{b-a}\right)^{q+1}+
\left(\frac{\frac{a+b}{2}-x}{b-a}\right)^{q+1}\right]^{1/q}(b-a)^{1/q}\|f'\|_{p},\\
\quad \quad \quad\quad\quad
p>1, \frac{1}{p}+\frac{1}{q}=1\quad \text{and}\quad f'\in L^{p}[a,b], \\
\left[\frac{1}{4}+\left|\frac{x-\frac{3a+b}{4}}{b-a}\right|\right]\|f'\|_{1},
\quad\quad\quad \quad\quad\quad f'\in L^{1}[a,b] \\
\end{array}}\right.
\end{align*}
hold for all $x\in[a,\frac{a+b}{2}]$.
\end{theorem}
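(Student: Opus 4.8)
The plan is to represent the left-hand quantity by a single Peano-kernel integral against $f'$ and then bound that integral three ways. Concretely, for $x\in[a,\frac{a+b}{2}]$ I would introduce the kernel
\begin{equation*}
K(x,t)=
\begin{cases}
t-a, & t\in[a,x],\\
t-\frac{a+b}{2}, & t\in(x,a+b-x),\\
t-b, & t\in[a+b-x,b],
\end{cases}
\end{equation*}
and aim to prove the identity
\begin{equation*}
\frac{f(x)+f(a+b-x)}{2}-\frac{1}{b-a}\int_a^b f(t)\,dt=\frac{1}{b-a}\int_a^b K(x,t)f'(t)\,dt,
\end{equation*}
which is legitimate for absolutely continuous $f$, since then $f'$ exists a.e.\ and piecewise integration by parts is available. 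Note that, unlike Theorems~\ref{Th1.1} and \ref{Th1.2}, no sharpness assertion is made here, so the entire task is the identity plus three clean majorizations.

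First I would verify this identity by integrating by parts on each of the three subintervals $[a,x]$, $[x,a+b-x]$, $[a+b-x,b]$, on each of which $K$ is affine. The boundary contributions at $a$ and $b$ vanish (the kernel is $t-a$ and $t-b$ there), while the interior endpoints $t=x$ and $t=a+b-x$, where $K$ jumps by magnitude $\frac{b-a}{2}$, contribute exactly $\frac{b-a}{2}f(x)$ and $\frac{b-a}{2}f(a+b-x)$ after the pieces are collected; the three residual integrals $-\int_a^x f-\int_x^{a+b-x}f-\int_{a+b-x}^b f$ assemble into $-\int_a^b f$. Checking that these boundary terms telescope with the correct signs is the one place where an endpoint or sign slip would derail the computation.

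With the identity in hand, the three estimates follow from the three standard ways of bounding $\bigl|\int_a^b Kf'\bigr|$. For $f'\in L^\infty$ I would use $|\int Kf'|\le\|f'\|_\infty\int_a^b|K(x,t)|\,dt$; for $f'\in L^p$ Hölder's inequality gives $|\int Kf'|\le\|f'\|_p\bigl(\int_a^b|K(x,t)|^q\,dt\bigr)^{1/q}$; and for $f'\in L^1$ the crude bound $|\int Kf'|\le\|f'\|_1\sup_{t}|K(x,t)|$. Because $K$ is piecewise affine and, on the middle interval, symmetric about $\frac{a+b}{2}$, these three kernel functionals reduce to elementary integrals: $\int_a^b|K|\,dt=(x-a)^2+(\frac{a+b}{2}-x)^2$, then $\int_a^b|K|^q\,dt=\frac{2}{q+1}\bigl[(x-a)^{q+1}+(\frac{a+b}{2}-x)^{q+1}\bigr]$, and $\sup_t|K|=\max\{x-a,\frac{a+b}{2}-x\}$.

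The remaining, and really only, obstacle is cosmetic algebra: rewriting each of these quantities in the centered form displayed in the theorem. The clean route is the substitution $u=x-\frac{3a+b}{4}$, under which $x-a=u+\frac{b-a}{4}$ and $\frac{a+b}{2}-x=\frac{b-a}{4}-u$; then $(x-a)^2+(\frac{a+b}{2}-x)^2=2u^2+\frac{(b-a)^2}{8}$ immediately yields the $L^\infty$ constant $\frac18+2\bigl(\frac{u}{b-a}\bigr)^2$. Dividing the $L^p$ expression by $b-a$ and factoring out $(b-a)^{-(q+1)/q}$ reproduces the stated form after absorbing a factor $(b-a)^{1/q}$, using $(\frac{2}{q+1})^{1/q}=\frac{2^{1/q}}{(q+1)^{1/q}}$. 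For $L^1$, the identity $\max\{\alpha,\beta\}=\frac{\alpha+\beta}{2}+\frac{|\alpha-\beta|}{2}$ with $\alpha=x-a$ and $\beta=\frac{a+b}{2}-x$ gives $\sup_t|K|=\frac{b-a}{4}+|x-\frac{3a+b}{4}|$, and division by $b-a$ finishes the third case.
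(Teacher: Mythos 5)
Your proof is correct: the kernel identity checks out by piecewise integration by parts, and the three kernel functionals $\int_a^b|K|\,dt$, $\bigl(\int_a^b|K|^q\,dt\bigr)^{1/q}$ and $\sup_t|K|$ evaluate exactly as you state, reproducing all three constants after the substitution $u=x-\frac{3a+b}{4}$. Note that the paper itself gives no proof of this theorem (it is quoted from Dragomir's cited work), and your argument is precisely the standard Montgomery-kernel approach used there, so there is nothing further to compare.
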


Recently, Alomari \cite{a20111} studied the companion of Ostrowski inequality \eqref{1.2} for differentiable bounded mappings.
In \cite{liu2009}, Liu established some companions of an Ostrowski type integral inequality for functions whose first derivatives are absolutely continuous and second derivatives belong to $L^{p}$ $(1\leq p \leq\infty)$ spaces.

\begin{theorem}\label{Th1.4}
Let $f:[a,b]\rightarrow\mathbb{R}$ be such that $f'$ is absolutely continuous on $[a,b]$ and $f''\in L^{\infty}[a,b]$. Then for all $x\in[a,\frac{a+b}{2}]$ we have
\begin{align}
&\left|\frac{f(x)+f(a+b-x)}{2}-\left(x-\frac{3a+b}{4}\right)\frac{f'(x)-f'(a+b-x)}{2}-\frac{1}{b-a}\int_{a}^{b}f(t)dt\right|\nonumber\\
\leq&\left[\frac{1}{96}+\frac{1}{2}\frac{\left(x-\frac{3a+b}{4}\right)^2}{(b-a)^2}\right](b-a)^2\|f''\|_{\infty}. \label{1.5'}
\end{align}
The constant $\frac{1}{96}$ is sharp in the sense that it can not be
replaced by a smaller one.
\end{theorem}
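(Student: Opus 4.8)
The plan is to represent the left-hand side as a single integral against a second-order Peano kernel and then estimate by pulling out $\|f''\|_{\infty}$. Write
\[
S(x):=\frac{f(x)+f(a+b-x)}{2}-\left(x-\frac{3a+b}{4}\right)\frac{f'(x)-f'(a+b-x)}{2}-\frac{1}{b-a}\int_{a}^{b}f(t)\,dt,
\]
and introduce the piecewise-quadratic kernel
\[
K(x,t)=\begin{cases}-\frac12(t-a)^2, & t\in[a,x],\\ -\frac12\left(t-\frac{a+b}{2}\right)^2, & t\in[x,a+b-x],\\ -\frac12(t-b)^2, & t\in[a+b-x,b].\end{cases}
\]
The first goal is the representation identity $S(x)=\frac{1}{b-a}\int_{a}^{b}K(x,t)f''(t)\,dt$, valid for every $x\in[a,\frac{a+b}{2}]$ (the middle branch degenerating to a point when $x=\frac{a+b}{2}$).

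To obtain this identity I would integrate $\int_{a}^{b}K(x,t)f''(t)\,dt$ by parts twice on each of the three subintervals, which is legitimate since $f'$ is absolutely continuous. On each piece $K$ is a quadratic with $\partial_t^2 K\equiv -1$, so the twice-integrated terms assemble into $-\int_{a}^{b}f(t)\,dt$, while the boundary terms collect the nodal values of $f$ and $f'$. The kernel is engineered precisely so that these come out correctly: because $K$ and $\partial_t K$ both vanish at $a$ and $b$, the endpoints contribute nothing; at the interior nodes $x$ and $a+b-x$ the jumps of $\partial_t K$ each equal $\frac{b-a}{2}$, and the jumps of $K$ equal $\pm\frac{b-a}{2}\left(x-\frac{3a+b}{4}\right)$, which is exactly what is needed to produce the coefficients $\tfrac12$ on $f(x),f(a+b-x)$ and $\mp\tfrac12\left(x-\frac{3a+b}{4}\right)$ on $f'(x),f'(a+b-x)$. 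One also checks that continuity of $K$ and $\partial_t K$ at the center forces the additive constant on the middle branch to be zero. Verifying these jump relations — and thereby pinning the kernel down uniquely — is the real content of the argument; I expect this to be the main obstacle, while everything afterward is bookkeeping.

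With the identity in hand the estimate is immediate. Taking absolute values gives
\[
|S(x)|\le\frac{\|f''\|_{\infty}}{b-a}\int_{a}^{b}|K(x,t)|\,dt,
\]
and since $K(x,\cdot)\le 0$ we have $|K|=-K$, so the remaining integral splits into three elementary cubic integrals. A direct computation yields
\[
\frac{1}{b-a}\int_{a}^{b}|K(x,t)|\,dt=\frac{1}{3(b-a)}\left[(x-a)^3+\left(\frac{a+b}{2}-x\right)^3\right]=\left[\frac{1}{96}+\frac12\frac{\left(x-\frac{3a+b}{4}\right)^2}{(b-a)^2}\right](b-a)^2,
\]
where the last equality is a routine rearrangement (set $s=x-a$, $L=b-a$ and expand). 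This is precisely \eqref{1.5'}.

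Finally, for the sharpness of $\frac{1}{96}$ I would specialize to $x=\frac{3a+b}{4}$, where the bracket collapses to $\frac{1}{96}$, and test with $f(t)=t^2$, so that $f''\equiv 2$ and $\|f''\|_{\infty}=2$. Because $K(x,\cdot)$ has constant sign, the bound $|S(x)|\le\frac{\|f''\|_{\infty}}{b-a}\int_{a}^{b}|K|\,dt$ is an equality for this $f$, and both sides equal $\frac{(b-a)^2}{48}$; hence no constant smaller than $\frac{1}{96}$ can hold. Thus the only delicate point is the correct determination of the kernel and its jumps — once that is settled, the estimate and its sharpness follow by direct computation.
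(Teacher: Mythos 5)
Your proposal is correct and follows essentially the same route as the paper: your kernel is exactly the negative of the kernel $K(x,t)$ in Lemma \ref{lem2.1} (so your representation of $S(x)$ is the paper's identity \eqref{2.2} with the sign flipped), and your evaluation of $\frac{1}{b-a}\int_a^b|K(x,t)|\,dt$ agrees with the paper's \eqref{2.11}; the final bound and the sharpness test at $x=\frac{3a+b}{4}$ with $f(t)=t^2$ are both correct. (The paper itself imports Theorem \ref{Th1.4} from \cite{liu2009} without reproving it, but the two ingredients you use are precisely the ones it records in Section 2.)
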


For other related results, the reader may be refer to \cite{a20112, a20113, bdg2009, d20051,  d2011, d2001, hn2011, l2008, l2010, lxw2010, l2007, s20101, s2010, ss2011, thd2008, thyc2011, u2003, v2011} and the
references therein.

The main aim of this paper is to establish  some companions of perturbed Ostrowski type integral inequalities for functions whose second derivatives are bounded (Theorem \ref{th2.1}-\ref{th2.5}). Some applications to composite quadrature rules, and to probability density functions are also given.

\section{Main results}
To prove our main results, we need the following lemmas.
\begin{lemma}\label{lem2.1}
\cite{liu2009} Let $f:[a,b]\rightarrow\mathbb{R}$ be such that $f'$ is absolutely continuous on $[a,b]$. Denote by $K(x,t):[a,b]\rightarrow\mathbb{R}$
the kernel given by
\begin{align}
K(x,t)=\left\{{\begin{aligned} &\frac{1}{2}(t-a)^2,&&t\in[a,x],\\
&\frac{1}{2}\left(t-\frac{a+b}{2}\right)^2,&&t\in(x,a+b-x],\\
&\frac{1}{2}(t-b)^2,&&t\in(a+b-x,b],\\
\end{aligned}}\right.\label{2.1}
\end{align}
then the identity
\begin{align}
&\frac{1}{b-a}\int_{a}^{b}K(x,t)f''(t)dt\nonumber\\
=&\frac{1}{b-a}\int_{a}^{b}f(t)dt-\frac{f(x)+f(a+b-x)}{2}+\left(x-\frac{3a+b}{4}\right)\frac{f'(x)-f'(a+b-x)}{2}\label{2.2}
\end{align}
holds.
\end{lemma}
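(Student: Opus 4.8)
The plan is to establish \eqref{2.2} by a direct double integration by parts, carried out separately on each of the three subintervals on which the kernel $K(x,\cdot)$ is given by a single quadratic. Since $f'$ is absolutely continuous on $[a,b]$, the second derivative $f''$ exists almost everywhere and is integrable, so the integration-by-parts formula is legitimate on each piece. First I would split according to the definition \eqref{2.1},
\begin{align*}
\int_a^b K(x,t) f''(t)\,dt
&= \frac{1}{2}\int_a^x (t-a)^2 f''(t)\,dt
+ \frac{1}{2}\int_x^{a+b-x}\left(t - \frac{a+b}{2}\right)^2 f''(t)\,dt \\
&\quad + \frac{1}{2}\int_{a+b-x}^b (t-b)^2 f''(t)\,dt.
\end{align*}

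On each of these three integrals I would integrate by parts twice. The first pass transfers the derivative onto the quadratic weight, reducing $f''$ to $f'$ and producing the linear factor $(t-a)$, $(t-\frac{a+b}{2})$, or $(t-b)$; the second pass reduces $f'$ to $f$ and leaves a plain integral of $f$ over the corresponding subinterval. The three surviving integrals of $f$, taken over $[a,x]$, $(x,a+b-x]$, and $(a+b-x,b]$, recombine into $\int_a^b f(t)\,dt$, which after division by $b-a$ yields the mean-value term in the claimed identity.

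The bulk of the work, and the step I expect to be the main obstacle, is the careful bookkeeping of the boundary terms generated at the four nodes $a$, $x$, $a+b-x$, and $b$. Here the structure of $K$ helps: the weight $\frac{1}{2}(t-a)^2$ and its derivative $(t-a)$ both vanish at $t=a$, and $\frac{1}{2}(t-b)^2$ together with $(t-b)$ both vanish at $t=b$, so the endpoints $a$ and $b$ contribute nothing and all nonzero boundary contributions are concentrated at the interior nodes $x$ and $a+b-x$. Collecting them, I expect the terms involving $f(x)$ and $f(a+b-x)$ to combine with common coefficient $-\frac{b-a}{2}$, producing $-\frac{f(x)+f(a+b-x)}{2}$ after dividing by $b-a$, while the terms involving $f'(x)$ and $f'(a+b-x)$ should appear as $\frac{1}{2}\left[(x-a)^2 - \left(\frac{a+b}{2}-x\right)^2\right]\bigl(f'(x)-f'(a+b-x)\bigr)$.

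The final algebraic simplification is what makes the factor $\left(x-\frac{3a+b}{4}\right)$ emerge. Factoring the difference of squares gives
\[
(x-a)^2 - \left(\frac{a+b}{2}-x\right)^2
= \left[(x-a) + \left(\frac{a+b}{2}-x\right)\right]\left[(x-a) - \left(\frac{a+b}{2}-x\right)\right]
= \frac{b-a}{2}\left(2x - \frac{3a+b}{2}\right),
\]
and the right-hand side equals $\left(x-\frac{3a+b}{4}\right)(b-a)$. Substituting this back into the $f'$ coefficient and dividing the whole identity by $b-a$ produces exactly $\left(x-\frac{3a+b}{4}\right)\frac{f'(x)-f'(a+b-x)}{2}$; assembling the three groups of terms then yields the stated identity \eqref{2.2}.
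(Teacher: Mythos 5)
Your proposal is correct: the paper itself states Lemma \ref{lem2.1} without proof (it is imported from \cite{liu2009}), and the argument there is precisely your double integration by parts on the three subintervals, with the boundary terms at $x$ and $a+b-x$ collected and the difference of squares $(x-a)^2-\left(\frac{a+b}{2}-x\right)^2=(b-a)\left(x-\frac{3a+b}{4}\right)$ producing the perturbation term. All of your intermediate claims (vanishing of the boundary contributions at $a$ and $b$, the common coefficient $-\frac{b-a}{2}$ on $f(x)$ and $f(a+b-x)$, and the stated $f'$ coefficient) check out.
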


\begin{lemma}\label{lem2.2}
\cite[Gr\"uss inequality]{g1935} Let $f,g:[a,b]\rightarrow\mathbb{R}$ be two integrable functions such that $\phi\leq f(t)\leq \Phi$ and $\gamma\leq g(t)\leq\Gamma$ for all $t\in[a,b],$ where $\phi,\Phi,\gamma$ and $\Gamma$ are constants. Then we have
\begin{align}\label{2.3}
 \left|\frac{1}{b-a}\int_{a}^{b}f(t)g(t)dt-\frac{1}{b-a}\int_{a}^{b}f(t)dt\cdot\frac{1}{b-a}\int_{a}^{b}g(t)dt\right|
\leq \frac{1}{4}(\Phi-\phi)(\Gamma-\gamma).
\end{align}
\end{lemma}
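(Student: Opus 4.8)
The plan is to prove this classical inequality by reducing it to a variance estimate for a single bounded function. First I would introduce the \emph{Chebyshev functional}
\[
T(f,g)=\frac{1}{b-a}\int_{a}^{b}f(t)g(t)\,dt-\frac{1}{b-a}\int_{a}^{b}f(t)\,dt\cdot\frac{1}{b-a}\int_{a}^{b}g(t)\,dt,
\]
so that the left-hand side of \eqref{2.3} is exactly $|T(f,g)|$. The key structural step is to rewrite this functional as a symmetric double integral (Korkine's identity),
\[
T(f,g)=\frac{1}{2(b-a)^{2}}\int_{a}^{b}\int_{a}^{b}\bigl(f(t)-f(s)\bigr)\bigl(g(t)-g(s)\bigr)\,dt\,ds,
\]
which is verified by expanding the product into its four terms and using Fubini's theorem; two of the terms reproduce $2(b-a)\int fg$ and the cross terms reproduce $-2\int f\int g$.

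Once this identity is in hand, I would apply the Cauchy--Schwarz inequality for double integrals to the factors $f(t)-f(s)$ and $g(t)-g(s)$, obtaining
\[
|T(f,g)|\leq\sqrt{T(f,f)}\,\sqrt{T(g,g)}.
\]
This reduces the whole problem to bounding the ``variance'' $T(f,f)$ of a single function subject only to the hypothesis $\phi\le f\le\Phi$, and then doing the same for $g$.

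The heart of the argument --- and the step I expect to be the main obstacle --- is the sharp variance estimate $T(f,f)\le\frac14(\Phi-\phi)^{2}$. To get it I would start from the elementary pointwise nonnegativity
\[
\frac{1}{b-a}\int_{a}^{b}\bigl(\Phi-f(t)\bigr)\bigl(f(t)-\phi\bigr)\,dt\geq 0,
\]
which holds because $\phi\le f(t)\le\Phi$ forces both factors to be nonnegative. Writing $\bar f=\frac{1}{b-a}\int_{a}^{b}f(t)\,dt$ and expanding, this rearranges to $\frac{1}{b-a}\int_{a}^{b}f^{2}\le(\Phi+\phi)\bar f-\Phi\phi$, whence
\[
T(f,f)=\frac{1}{b-a}\int_{a}^{b}f^{2}(t)\,dt-\bar f^{\,2}\leq(\Phi-\bar f)(\bar f-\phi).
\]
Applying the elementary bound $(\Phi-\bar f)(\bar f-\phi)\le\frac14(\Phi-\phi)^{2}$ (AM--GM applied to the two nonnegative numbers $\Phi-\bar f$ and $\bar f-\phi$, which sum to $\Phi-\phi$) finishes this estimate, and the identical computation gives $T(g,g)\le\frac14(\Gamma-\gamma)^{2}$.

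Combining the Cauchy--Schwarz bound with these two variance estimates yields
\[
|T(f,g)|\leq\tfrac12(\Phi-\phi)\cdot\tfrac12(\Gamma-\gamma)=\tfrac14(\Phi-\phi)(\Gamma-\gamma),
\]
which is precisely \eqref{2.3}. The only delicate point is the passage through the pre-Gr\"uss variance inequality of the preceding paragraph; the Korkine identity and the final combination are routine expansions, and the Cauchy--Schwarz and AM--GM steps are standard.
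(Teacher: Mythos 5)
Your proof is correct. Note, however, that the paper offers no proof of this lemma at all: it is stated as a known classical result with a citation to Gr\"uss's original 1935 paper, so there is no in-paper argument to compare against. What you have written is the standard textbook proof --- Korkine's identity for the Chebyshev functional, the Cauchy--Schwarz inequality for double integrals giving $|T(f,g)|\le\sqrt{T(f,f)}\,\sqrt{T(g,g)}$, and the pre-Gr\"uss variance bound $T(f,f)\le(\Phi-\bar f)(\bar f-\phi)\le\frac14(\Phi-\phi)^2$ obtained from the nonnegativity of $\int_a^b(\Phi-f)(f-\phi)\,dt$ together with AM--GM. Every step checks out: the expansion verifying Korkine's identity is as you describe, the variance rearrangement is algebraically exact, and the final combination gives the constant $\frac14$. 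This fully justifies the lemma that the paper takes on faith.
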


\begin{theorem}\label{th2.1}
Let $f:[a,b]\rightarrow\mathbb{R}$ be such that $f'$ is absolutely continuous on $[a,b]$. If $f''\in L^1[a,b]$ and $\gamma\leq f''(x)\leq\Gamma$, $\forall\ x\in[a,b]$, then for all $x\in\left[a,\frac{a+b}{2}\right]$ we have
\begin{align}\label{2.4}
&\left|\frac{f(x)+f(a+b-x)}{2}-\left(x-\frac{3a+b}{4}\right)\frac{f'(x)-f'(a+b-x)}{2}\right.\nonumber\\
&\left.+\frac{f'(b)-f'(a)}{b-a}\left[\frac{1}{2}\left(x-\frac{3a+b}{4}\right)^2+\frac{(b-a)^2}{96}\right]-\frac{1}{b-a}\int_{a}^{b}f(t)dt\right|\nonumber\\
\leq &\frac{1}{8}(\Gamma-\gamma)\left[\frac{b-a}{4}+\left|x-\frac{3a+b}{4}\right|\right]^2.
\end{align}
\end{theorem}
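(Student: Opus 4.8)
The plan is to combine the integral identity of Lemma~\ref{lem2.1} with the Grüss inequality of Lemma~\ref{lem2.2}, treating the kernel $K(x,\cdot)$ and $f''$ as the two functions in the Grüss estimate. Observe that the right-hand side of the identity \eqref{2.2} is precisely $\frac{1}{b-a}\int_a^b K(x,t)f''(t)\,dt$, and that the quantity whose modulus we wish to bound in \eqref{2.4} differs from the negative of this right-hand side only by the perturbation term
\[
\frac{f'(b)-f'(a)}{b-a}\left[\frac{1}{2}\left(x-\frac{3a+b}{4}\right)^2+\frac{(b-a)^2}{96}\right].
\]
Since $\frac{1}{b-a}\int_a^b f''(t)\,dt=\frac{f'(b)-f'(a)}{b-a}$, this perturbation is exactly the ``product of averages'' term that the Grüss inequality subtracts off, provided the bracketed factor equals the mean value $\frac{1}{b-a}\int_a^b K(x,t)\,dt$.

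So first I would verify this matching by computing $\int_a^b K(x,t)\,dt$ directly from the three-piece definition \eqref{2.1}. Each of the outer pieces contributes $\frac{(x-a)^3}{6}$ (the cubic primitives of $(t-a)^2$ and $(t-b)^2$ over the respective subintervals being equal by symmetry), while the middle piece contributes $\frac{1}{3}\left(\frac{a+b}{2}-x\right)^3$ after integrating the symmetric parabola $\frac12(t-\frac{a+b}{2})^2$. Collecting these and dividing by $b-a$, a routine simplification using the substitution $w=x-\frac{3a+b}{4}$ (so that $x-a=\frac{b-a}{4}+w$ and $\frac{a+b}{2}-x=\frac{b-a}{4}-w$) yields
\[
\frac{1}{b-a}\int_a^b K(x,t)\,dt=\frac{1}{2}\left(x-\frac{3a+b}{4}\right)^2+\frac{(b-a)^2}{96},
\]
confirming that applying Grüss to $K(x,\cdot)$ and $f''$ reproduces exactly the left-hand side of \eqref{2.4} inside the absolute value.

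The remaining, and most delicate, step is to determine the oscillation $\Phi-\phi$ of the kernel over $t\in[a,b]$ for fixed $x$. Since $K(x,t)$ is a half-square on each subinterval, it is nonnegative and attains the value $0$ at $t=a$, $t=\frac{a+b}{2}$, and $t=b$, so $\phi=0$. For the maximum I would check the pieces separately: the first piece peaks at $t=x$ with value $\frac12(x-a)^2$, the third piece peaks at $t=a+b-x$ with the same value $\frac12(x-a)^2$, and the middle parabola, vanishing at its vertex $\frac{a+b}{2}$, peaks at either endpoint with value $\frac12\left(\frac{a+b}{2}-x\right)^2$. Hence $\Phi=\frac12\max\{(x-a)^2,(\tfrac{a+b}{2}-x)^2\}$. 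This is where the absolute value in \eqref{2.4} arises: using $w=x-\frac{3a+b}{4}\in[-\frac{b-a}{4},\frac{b-a}{4}]$, both $x-a=\frac{b-a}{4}+w$ and $\frac{a+b}{2}-x=\frac{b-a}{4}-w$ are nonnegative, so $\max\{x-a,\frac{a+b}{2}-x\}=\frac{b-a}{4}+|w|$ and therefore
\[
\Phi-\phi=\frac{1}{2}\left[\frac{b-a}{4}+\left|x-\frac{3a+b}{4}\right|\right]^2.
\]
Substituting this together with the mean-value computation into the Grüss bound $\frac14(\Phi-\phi)(\Gamma-\gamma)$ gives exactly the right-hand side of \eqref{2.4}, completing the proof. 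The main obstacle is the case-free evaluation of $\Phi$: the naive maximum splits according to whether $x\lessgtr\frac{3a+b}{4}$, and the clean form of the final bound emerges only after recognizing that the $w$-substitution merges the two cases into the single expression $\frac{b-a}{4}+|w|$.
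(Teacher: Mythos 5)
Your proposal is correct and follows essentially the same route as the paper's proof: apply the Grüss inequality to the pair $K(x,\cdot)$ and $f''$, identify the product-of-means term with the perturbation via $\frac{1}{b-a}\int_a^b K(x,t)\,dt=\frac{1}{2}\left(x-\frac{3a+b}{4}\right)^2+\frac{(b-a)^2}{96}$ and \eqref{2.6}, and bound the oscillation of the kernel by $\frac12\left[\frac{b-a}{4}+\left|x-\frac{3a+b}{4}\right|\right]^2$. The only cosmetic difference is that you obtain the kernel's maximum via the substitution $w=x-\frac{3a+b}{4}$, whereas the paper uses the identity $\max\{A,B\}=\frac12\left[(A+B)+|A-B|\right]$; both yield the same expression.
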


\begin{proof}
It is clear that for all $t\in[a,b]$ and $x\in\left[a,\frac{a+b}{2}\right]$, we have
\begin{align*}
0\leq &K(x,t)\leq\max\left\{\frac{1}{2}(x-a)^2,\ \frac{1}{2}\left(\frac{a+b}{2}-x\right)^2\right\}\\
=&\frac{1}{4}\left\{\left[(x-a)^2+\left(\frac{a+b}{2}-x\right)^2\right]+\left|(x-a)^2-\left(\frac{a+b}{2}-x\right)^2\right|\right\}\\
=&\frac{1}{2}\left[\left(x-\frac{3a+b}{4}\right)^2+\frac{(b-a)^2}{16}+\frac{b-a}{2}\left|x-\frac{3a+b}{4}\right|\right]\\
=&\frac{1}{2}\left[\frac{b-a}{4}+\left|x-\frac{3a+b}{4}\right|\right]^2.
\end{align*}
Applying Lemma \ref{lem2.2} to the functions $K(x,\cdot)$ and $f''(\cdot)$, we get
\begin{align}\label{2.5}
&\left|\frac{1}{b-a}\int_{a}^{b}K(x,t)f''(t)dt-\frac{1}{b-a}\int_{a}^{b}K(x,t)dt\cdot\frac{1}{b-a}\int_{a}^{b}f''(t)dt\right|\nonumber\\
\leq &\frac{1}{8}(\Gamma-\gamma)\left[\frac{b-a}{4}+\left|x-\frac{3a+b}{4}\right|\right]^2
\end{align}
for all $x\in[a,\frac{a+b}{2}]$. By a simple calculation, we obtain
\begin{equation}\label{2.6}
\frac{1}{b-a}\int_{a}^{b}f''(t)dt=\frac{f'(b)-f'(a)}{b-a}
\end{equation}
and
\begin{equation}\label{2.7}
\frac{1}{b-a}\int_{a}^{b}K(x,t)dt=\frac{1}{2}\left(x-\frac{3a+b}{4}\right)^2+\frac{(b-a)^2}{96}.
\end{equation}
Combining  \eqref{2.2},  \eqref{2.5}-\eqref{2.7}, we obtain \eqref{2.4} as required.
\end{proof}

\begin{corollary}
In the inequality \eqref{2.4}, choose

$(1)$ $x=\frac{3a+b}{4}$, we get
\begin{align}\label{2.8'}
 \left|\frac{f(\frac{3a+b}{4})+f(\frac{a+3b}{4})}{2}+\frac{f'(b)-f'(a)}{b-a}\frac{(b-a)^2}{96}-\frac{1}{b-a}\int_{a}^{b}f(t)dt\right|
\leq \frac{1}{128}(\Gamma-\gamma)(b-a)^2.
\end{align}

$(2)$ $x=a$, we get
\begin{align*}
 \left|\frac{f(a)+f(b)}{2}-\frac{f'(b)-f'(a)}{b-a}\frac{(b-a)^2}{12}-\frac{1}{b-a}\int_{a}^{b}f(t)dt\right|
\leq \frac{1}{32}(\Gamma-\gamma)(b-a)^2,
\end{align*}
which is better than \cite[Corollary 2.3]{cdr1999}
since a smaller estimator is given here.

$(3)$ $x=\frac{a+b}{2}$, we get
\begin{align*}
 \left|f\left(\frac{a+b}{2}\right)+\frac{f'(b)-f'(a)}{b-a}\frac{(b-a)^2}{24}-\frac{1}{b-a}\int_{a}^{b}f(t)dt\right|
\leq \frac{1}{32}(\Gamma-\gamma)(b-a)^2,
\end{align*}
which is the inequality given in \cite[Corollay 2.2]{cdr1999}.
\end{corollary}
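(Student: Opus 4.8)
The plan is to obtain all three inequalities as direct specializations of Theorem~\ref{th2.1}: each is simply \eqref{2.4} evaluated at one particular abscissa, so the only work is to simplify the two sides of the resulting expression. In every case I would first record the reflected point $a+b-x$ and the signed displacement $x-\frac{3a+b}{4}$, and then substitute these into the left-hand side of \eqref{2.4} and into its bound $\frac18(\Gamma-\gamma)\big[\frac{b-a}{4}+|x-\frac{3a+b}{4}|\big]^2$.

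Parts~$(1)$ and $(3)$ are the easy ones, because in each the derivative-difference term drops out. For $(1)$, $x=\frac{3a+b}{4}$ gives $x-\frac{3a+b}{4}=0$ and $a+b-x=\frac{a+3b}{4}$; the vanishing displacement kills both $\left(x-\frac{3a+b}{4}\right)\frac{f'(x)-f'(a+b-x)}{2}$ and the $\frac12\left(x-\frac{3a+b}{4}\right)^2$ part of the bracket, leaving only the $\frac{(b-a)^2}{96}$ contribution, while the bound reduces to $\frac18(\Gamma-\gamma)\frac{(b-a)^2}{16}=\frac{1}{128}(\Gamma-\gamma)(b-a)^2$, which is \eqref{2.8'}. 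For $(3)$, $x=\frac{a+b}{2}$ makes the reflection trivial, $a+b-x=x$, so $\frac{f(x)+f(a+b-x)}{2}=f(\frac{a+b}{2})$ and $f'(x)-f'(a+b-x)=0$ removes the derivative-difference term again; with $x-\frac{3a+b}{4}=\frac{b-a}{4}$ the bracket collapses to $\frac12\cdot\frac{(b-a)^2}{16}+\frac{(b-a)^2}{96}=\frac{(b-a)^2}{24}$, and the bound becomes $\frac18(\Gamma-\gamma)\big[\frac{b-a}{4}+\frac{b-a}{4}\big]^2=\frac{1}{32}(\Gamma-\gamma)(b-a)^2$.

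The one place to be careful, and the only genuine obstacle I foresee, is part~$(2)$, $x=a$, where the derivative-difference term survives. Here $a+b-x=b$ and $x-\frac{3a+b}{4}=-\frac{b-a}{4}$, so the term $-\left(x-\frac{3a+b}{4}\right)\frac{f'(x)-f'(a+b-x)}{2}$ equals $-\frac{b-a}{8}\big(f'(b)-f'(a)\big)$, while the bracketed coefficient equals $\frac{f'(b)-f'(a)}{b-a}\big(\frac{(b-a)^2}{32}+\frac{(b-a)^2}{96}\big)=\frac{b-a}{24}\big(f'(b)-f'(a)\big)$. Both feed the same $\frac{f'(b)-f'(a)}{b-a}$ prefactor, and collecting them through $-\frac18+\frac1{24}=-\frac1{12}$ yields the single term $-\frac{f'(b)-f'(a)}{b-a}\frac{(b-a)^2}{12}$, with the bound evaluating exactly as in part~$(3)$ to $\frac{1}{32}(\Gamma-\gamma)(b-a)^2$. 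Once this sign-and-coefficient bookkeeping is done correctly the three inequalities are in hand, and the comparisons asserted in the statement reduce to matching the explicit constants obtained here against those recorded in \cite{cdr1999}.
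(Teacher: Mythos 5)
Your proposal is correct and is exactly what the paper intends: the corollary follows by substituting $x=\frac{3a+b}{4}$, $x=a$, and $x=\frac{a+b}{2}$ into \eqref{2.4} and simplifying, and your bookkeeping in part $(2)$ (combining $-\tfrac18+\tfrac1{24}=-\tfrac1{12}$) reproduces the stated coefficient. No gap.
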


\begin{corollary}
Let $f$ as in Theorem \ref{th2.1}. Additionally, if $f$ is symmetric about $x=\frac{a+b}{2}$, i.e., $f(a+b-x)=f(x)$, then we have
\begin{align*}
 &\left|f(x)-\left(x-\frac{3a+b}{4}\right)f'(x)\right. \left.+\frac{f'(b)-f'(a)}{b-a}\left[\frac{1}{2}\left(x-\frac{3a+b}{4}\right)^2+\frac{(b-a)^2}{96}\right]-\frac{1}{b-a}\int_{a}^{b}f(t)dt\right|\nonumber\\
\leq&\frac{1}{8}(\Gamma-\gamma)\left[\frac{b-a}{4}+\left|x-\frac{3a+b}{4}\right|\right]^2
\end{align*}
for all $x\in\left[a,\frac{a+b}{2}\right]$.
\end{corollary}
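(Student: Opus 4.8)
The plan is to obtain this corollary as an immediate specialization of Theorem \ref{th2.1}, using the symmetry hypothesis to collapse the two symmetric combinations of $f$ and $f'$ that appear on the left-hand side of \eqref{2.4} into single-point evaluations. Nothing new needs to be estimated; the right-hand side is inherited verbatim.

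First I would use the defining relation $f(a+b-x)=f(x)$ directly. It gives $\frac{f(x)+f(a+b-x)}{2}=f(x)$, so the averaged value reduces to a single evaluation of $f$ at $x$.

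Next, to treat the derivative term, I would differentiate the identity $f(a+b-x)=f(x)$ with respect to $x$. Since $f'$ is absolutely continuous, $f$ is of class $C^1$ on $[a,b]$, so this differentiation is legitimate at every point, and the chain rule yields $-f'(a+b-x)=f'(x)$, that is $f'(a+b-x)=-f'(x)$. Consequently $\frac{f'(x)-f'(a+b-x)}{2}=\frac{f'(x)+f'(x)}{2}=f'(x)$, and the middle term $\left(x-\frac{3a+b}{4}\right)\frac{f'(x)-f'(a+b-x)}{2}$ collapses to $\left(x-\frac{3a+b}{4}\right)f'(x)$.

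Finally I would substitute these two simplifications into \eqref{2.4}, leaving the term containing $f'(b)-f'(a)$, the integral mean, and the entire right-hand side untouched; what results is precisely the asserted inequality. The only point requiring any attention is justifying the pointwise differentiation of the symmetry relation, and this is settled at once by the $C^1$ regularity that the absolute continuity of $f'$ guarantees. There is therefore no genuine obstacle: the result is a direct corollary in which the bound carries over unchanged from Theorem \ref{th2.1}.
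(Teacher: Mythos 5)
Your proposal is correct and is exactly the specialization the paper intends (the paper omits the proof as immediate): symmetry gives $\frac{f(x)+f(a+b-x)}{2}=f(x)$, and differentiating $f(a+b-x)=f(x)$ gives $f'(a+b-x)=-f'(x)$, hence $\frac{f'(x)-f'(a+b-x)}{2}=f'(x)$, after which \eqref{2.4} yields the stated inequality with the right-hand side unchanged. Your remark justifying the pointwise differentiation via the $C^1$ regularity implied by the absolute continuity of $f'$ is a sensible added precaution.
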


\begin{theorem}\label{th2.2}
Let $f:[a,b]\rightarrow\mathbb{R}$ be such that $f'$ is absolutely continuous on $[a,b]$. If $f''\in L^1[a,b]$ and $\gamma\leq f''(x)\leq\Gamma$, $\forall\ x\in[a,b]$, then for all $x\in\left[a,\frac{a+b}{2}\right]$ we have
\begin{align}\label{2.8}
&\left|\frac{f(x)+f(a+b-x)}{2}-\left(x-\frac{3a+b}{4}\right)\frac{f'(x)-f'(a+b-x)}{2}\right.\nonumber\\
&\left.+\frac{\Gamma+\gamma}{2}\left[\frac{1}{2}\left(x-\frac{3a+b}{4}\right)^2+\frac{(b-a)^2}{96}\right]
-\frac{1}{b-a}\int_{a}^{b}f(t)dt\right|\nonumber\\
\leq& \frac{\Gamma-\gamma}{2}\left[\frac{1}{2}\left(x-\frac{3a+b}{4}\right)^2+\frac{(b-a)^2}{96}\right].
\end{align}
\end{theorem}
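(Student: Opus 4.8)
The plan is to reduce everything to the identity of Lemma~\ref{lem2.1} and then apply a direct ``centering'' estimate, rather than the Gr\"uss inequality used in Theorem~\ref{th2.1}. First I would rewrite \eqref{2.2} as
\begin{align*}
\frac{f(x)+f(a+b-x)}{2}&-\left(x-\frac{3a+b}{4}\right)\frac{f'(x)-f'(a+b-x)}{2}-\frac{1}{b-a}\int_{a}^{b}f(t)\,dt\\
&=-\frac{1}{b-a}\int_{a}^{b}K(x,t)f''(t)\,dt,
\end{align*}
and then observe, using \eqref{2.7}, that the perturbation term appearing in \eqref{2.8} is exactly
\[
\frac{\Gamma+\gamma}{2}\left[\frac{1}{2}\left(x-\frac{3a+b}{4}\right)^2+\frac{(b-a)^2}{96}\right]=\frac{\Gamma+\gamma}{2}\cdot\frac{1}{b-a}\int_{a}^{b}K(x,t)\,dt.
\]
Consequently the quantity inside the absolute value on the left-hand side of \eqref{2.8} collapses to the single integral $\dfrac{1}{b-a}\displaystyle\int_{a}^{b}K(x,t)\left(\dfrac{\Gamma+\gamma}{2}-f''(t)\right)dt$.

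Second I would estimate this integral. Since $\gamma\le f''(t)\le\Gamma$ on $[a,b]$, subtracting the midpoint gives the centered bound $\left|f''(t)-\frac{\Gamma+\gamma}{2}\right|\le\frac{\Gamma-\gamma}{2}$ for all $t\in[a,b]$. Because the kernel is nonnegative, $K(x,t)\ge 0$ (as already recorded at the start of the proof of Theorem~\ref{th2.1}), I can move the modulus inside the integral and factor out this uniform bound:
\begin{align*}
\left|\frac{1}{b-a}\int_{a}^{b}K(x,t)\left(\frac{\Gamma+\gamma}{2}-f''(t)\right)dt\right|
&\le\frac{1}{b-a}\int_{a}^{b}K(x,t)\left|f''(t)-\frac{\Gamma+\gamma}{2}\right|dt\\
&\le\frac{\Gamma-\gamma}{2}\cdot\frac{1}{b-a}\int_{a}^{b}K(x,t)\,dt.
\end{align*}
A final application of \eqref{2.7} evaluates the remaining integral and yields the claimed bound $\frac{\Gamma-\gamma}{2}\left[\frac{1}{2}\left(x-\frac{3a+b}{4}\right)^2+\frac{(b-a)^2}{96}\right]$.

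There is essentially no hard step here: unlike Theorem~\ref{th2.1}, which requires the Gr\"uss inequality (Lemma~\ref{lem2.2}) together with the sharp $\sup$-bound on $K$, this is a plain ``pre-Gr\"uss'' estimate. The only points needing a little care are recognizing that subtracting the midpoint $\frac{\Gamma+\gamma}{2}$ is what makes the deviation of $f''$ controllable by the half-width $\frac{\Gamma-\gamma}{2}$, and that the sign $K(x,t)\ge 0$ over $[a,\frac{a+b}{2}]$ is what legitimizes pulling the absolute value inside the integral. Both facts are already at hand, so no identities beyond \eqref{2.2} and \eqref{2.7} are required.
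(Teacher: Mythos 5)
Your proposal is correct and is essentially the paper's own proof: the paper likewise subtracts a constant $C$ inside the kernel integral via \eqref{2.2} and \eqref{2.7}, chooses $C=\frac{\Gamma+\gamma}{2}$, and bounds the result by $\max_{t}|f''(t)-C|\cdot\frac{1}{b-a}\int_a^b|K(x,t)|\,dt\le\frac{\Gamma-\gamma}{2}\left[\frac{1}{2}\left(x-\frac{3a+b}{4}\right)^2+\frac{(b-a)^2}{96}\right]$. Your observation that $K\ge 0$ justifies $|K|=K$ is the same estimate in only slightly different clothing.
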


\begin{proof}
From \eqref{2.2} and \eqref{2.7}, we have
\begin{align*}
&\frac{1}{b-a}\int_{a}^{b}K(x,t)[f''(t)-C]dt\nonumber\\=&\frac{1}{b-a}\int_{a}^{b}f(t)dt-\frac{f(x)+f(a+b-x)}{2}\nonumber\\
&+\left(x-\frac{3a+b}{4}\right)\frac{f'(x)-f'(a+b-x)}{2}-C\left[\frac{1}{2}\left(x-\frac{3a+b}{4}\right)^2+\frac{(b-a)^2}{96}\right].
\end{align*}
Let $C=\frac{\Gamma+\gamma}{2}$, we get
\begin{align}\label{2.9}
&\left|\frac{f(x)+f(a+b-x)}{2}-\left(x-\frac{3a+b}{4}\right)\frac{f'(x-f'(a+b-x))}{2}\right.\nonumber\\
&\left.+\frac{\Gamma+\gamma}{2}\left[\frac{1}{2}\left(x-\frac{3a+b}{4}\right)^2+\frac{(b-a)^2}{96}\right]
-\frac{1}{b-a}\int_{a}^{b}f(t)dt\right|\nonumber\\
\leq &\max\limits_{t\in [a,b]}|f''(t)-C|\frac{1}{b-a}\int_{a}^{b}|K(x,t)|dt.
\end{align}
We also have
\begin{equation}\label{2.10}
\max\limits_{t\in [a,b]}|f''(t)-C|\leq\frac{\Gamma-\gamma}{2}
\end{equation}
and
\begin{equation}\label{2.11}
\frac{1}{b-a}\int_{a}^{b}|K(x,t)|dt=\frac{1}{2}\left(x-\frac{3a+b}{4}\right)^2+\frac{(b-a)^2}{96}.
\end{equation}
Therefore, from \eqref{2.9}-\eqref{2.11}, we obtain the desired inequality \eqref{2.8}.
\end{proof}

\begin{corollary}
In the inequality \eqref{2.8}, choose

$(1)$ $x=\frac{3a+b}{4}$, we get
\begin{equation}\label{2.12'}
\left|\frac{f(\frac{3a+b}{4})+f(\frac{a+3b}{4})}{2}
+\frac{\Gamma+\gamma}{2}\frac{(b-a)^2}{96}-\frac{1}{b-a}\int_{a}^{b}f(t)dt\right|
\leq \frac{1}{192}(\Gamma-\gamma)(b-a)^2.
\end{equation}

$(2)$ $x=a$, we get
\begin{align*}
 \left|\frac{f(a)+f(b)}{2}-\frac{f'(b)-f'(a)}{b-a}\frac{(b-a)^2}{8}+\frac{\Gamma+\gamma}{2}\frac{(b-a)^2}{24}-\frac{1}{b-a}\int_{a}^{b}f(t)dt\right|
\leq  \frac{1}{48}(\Gamma-\gamma)(b-a)^2.
\end{align*}

$(3)$ $x=\frac{a+b}{2}$, we get
\begin{equation*}
\left|f\left(\frac{a+b}{2}\right)+\frac{\Gamma+\gamma}{2}\frac{(b-a)^2}{24}-\frac{1}{b-a}\int_{a}^{b}f(t)dt\right|
\leq \frac{1}{48}(\Gamma-\gamma)(b-a)^2.
\end{equation*}
\end{corollary}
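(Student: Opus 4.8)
The plan is to specialize the master inequality \eqref{2.8} of Theorem \ref{th2.2} to three natural values of $x$ in $\left[a,\frac{a+b}{2}\right]$ and to simplify both sides in each case. Since \eqref{2.8} already carries out all of the analytic work, every part of the corollary reduces to an elementary computation of the quantities $x-\frac{3a+b}{4}$, the reflected point $a+b-x$, and the bracketed expression $\frac{1}{2}\left(x-\frac{3a+b}{4}\right)^2+\frac{(b-a)^2}{96}$ that appears on both sides of \eqref{2.8}.

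First, for $x=\frac{3a+b}{4}$ I would observe that $x-\frac{3a+b}{4}=0$, so the derivative-difference term vanishes and the bracket collapses to $\frac{(b-a)^2}{96}$; together with $a+b-x=\frac{a+3b}{4}$ this gives \eqref{2.12'}, whose right-hand side is $\frac{\Gamma-\gamma}{2}\cdot\frac{(b-a)^2}{96}=\frac{1}{192}(\Gamma-\gamma)(b-a)^2$. Next, for $x=a$ I would compute $x-\frac{3a+b}{4}=-\frac{b-a}{4}$ and $a+b-x=b$, so that $\frac{f(x)+f(a+b-x)}{2}=\frac{f(a)+f(b)}{2}$ and the derivative term equals $-\frac{f'(b)-f'(a)}{b-a}\cdot\frac{(b-a)^2}{8}$, while the bracket simplifies as $\frac{1}{2}\cdot\frac{(b-a)^2}{16}+\frac{(b-a)^2}{96}=\frac{(b-a)^2}{24}$. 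Finally, for $x=\frac{a+b}{2}$ the reflection is trivial since $a+b-x=x$, so $f(a+b-x)=f(x)$ and $f'(x)-f'(a+b-x)=0$, which kills the derivative term, and the bracket again equals $\frac{(b-a)^2}{24}$; this produces part (3).

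There is no genuine obstacle here, as the corollary is purely a matter of substitution into a result already proved. The only points requiring care are the sign of $x-\frac{3a+b}{4}$, which is negative at $x=a$ and positive at $x=\frac{a+b}{2}$ and therefore governs the sign of the derivative term, and the routine verification that the combined bracket equals $\frac{(b-a)^2}{24}$ in both the endpoint case $x=a$ and the midpoint case $x=\frac{a+b}{2}$. Each substitution then matches the three stated inequalities, and the corresponding right-hand sides $\frac{1}{192}(\Gamma-\gamma)(b-a)^2$, $\frac{1}{48}(\Gamma-\gamma)(b-a)^2$, and $\frac{1}{48}(\Gamma-\gamma)(b-a)^2$ follow immediately from $\frac{\Gamma-\gamma}{2}$ times the respective brackets.
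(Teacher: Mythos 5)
Your proposal is correct and coincides with the paper's (implicit) argument: the corollary is obtained by direct substitution of $x=\frac{3a+b}{4}$, $x=a$, and $x=\frac{a+b}{2}$ into inequality \eqref{2.8}, and all of your computations of $x-\frac{3a+b}{4}$, the reflected point, and the bracket $\frac{1}{2}\left(x-\frac{3a+b}{4}\right)^2+\frac{(b-a)^2}{96}$ check out. No further comment is needed.
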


\begin{corollary}
Let $f$ as in Theorem \ref{th2.2}. Additionally, if $f$ is symmetric about $x=\frac{a+b}{2}$, then we have
\begin{align*}
&\left|f(x)-\left(x-\frac{3a+b}{4}\right)f'(x)\right. \left.+\frac{\Gamma+\gamma}{2}\left[\frac{1}{2}\left(x-\frac{3a+b}{4}\right)^2+\frac{(b-a)^2}{96}\right]
-\frac{1}{b-a}\int_{a}^{b}f(t)dt\right|\nonumber\\
\leq& \frac{\Gamma-\gamma}{2}\left[\frac{1}{2}\left(x-\frac{3a+b}{4}\right)^2+\frac{(b-a)^2}{96}\right],
\end{align*}
for all $x\in\left[a,\frac{a+b}{2}\right]$.
\end{corollary}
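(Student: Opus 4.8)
The plan is to specialize inequality \eqref{2.8} of Theorem \ref{th2.2} using the symmetry hypothesis together with its differentiated form; no new estimation is needed, so the whole argument reduces to two algebraic simplifications of the left-hand side of \eqref{2.8}, after which the result drops out verbatim.

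First I would rewrite the averaged function value. The hypothesis $f(a+b-x)=f(x)$ holds as an identity on $\left[a,\frac{a+b}{2}\right]$, so immediately $\frac{f(x)+f(a+b-x)}{2}=f(x)$, which produces the first term of the corollary. Next I would treat the derivative term. Since $f'$ is absolutely continuous, $f$ is differentiable on $[a,b]$, and I may differentiate the identity $f(a+b-x)=f(x)$ with respect to $x$; the chain rule yields $-f'(a+b-x)=f'(x)$, that is $f'(a+b-x)=-f'(x)$. Substituting this gives $\frac{f'(x)-f'(a+b-x)}{2}=\frac{f'(x)+f'(x)}{2}=f'(x)$, whence $\left(x-\frac{3a+b}{4}\right)\frac{f'(x)-f'(a+b-x)}{2}=\left(x-\frac{3a+b}{4}\right)f'(x)$, matching the second term of the corollary.

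The two remaining pieces of \eqref{2.8} — the perturbation $\frac{\Gamma+\gamma}{2}\left[\frac{1}{2}\left(x-\frac{3a+b}{4}\right)^2+\frac{(b-a)^2}{96}\right]$ and the integral mean $\frac{1}{b-a}\int_a^b f(t)\,dt$ — are unaffected by the symmetry, and the right-hand bound $\frac{\Gamma-\gamma}{2}\left[\frac{1}{2}\left(x-\frac{3a+b}{4}\right)^2+\frac{(b-a)^2}{96}\right]$ is likewise unchanged, so inserting the two simplifications above into \eqref{2.8} yields the claimed inequality. The only step that deserves a word of justification, rather than being the genuinely hard part, is the legitimacy of differentiating the symmetry relation; this is guaranteed because $f'$ is absolutely continuous, hence $f$ is differentiable at every point of $[a,b]$, so both $f(x)$ and $f(a+b-x)$ are differentiable in $x$. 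With that observed, the proof is a direct substitution.
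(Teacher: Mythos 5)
Your proposal is correct and is exactly the intended argument: the paper states this corollary without proof precisely because it follows from \eqref{2.8} by the two substitutions you carry out, namely $f(a+b-x)=f(x)$ giving $\tfrac{f(x)+f(a+b-x)}{2}=f(x)$, and the differentiated identity $f'(a+b-x)=-f'(x)$ giving $\tfrac{f'(x)-f'(a+b-x)}{2}=f'(x)$. Your remark that differentiating the symmetry relation is legitimate (since the absolute continuity of $f'$ presupposes $f$ is differentiable on $[a,b]$) is the right justification, and nothing further is needed.
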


\begin{theorem}\label{th2.3}
Let $f:[a,b]\rightarrow\mathbb{R}$ be such that $f'$ is absolutely continuous on $[a,b]$.
If $f''\in L^1[a,b]$ and $\gamma\leq f''(x)\leq\Gamma$, $\forall\ x\in[a,b]$, then for all $x\in\left[a,\frac{a+b}{2}\right]$ we have
\begin{align}\label{2.14}
&\left|\frac{f(x)+f(a+b-x)}{2}-\left(x-\frac{3a+b}{4}\right)\frac{f'(x)-f'(a+b-x)}{2}\right.\nonumber\\
&\left.+\frac{f'(b)-f'(a)}{b-a}\left[\frac{1}{2}\left(x-\frac{3a+b}{4}\right)^2+\frac{(b-a)^2}{96}\right]-\frac{1}{b-a}\int_{a}^{b}f(t)dt\right|\nonumber\\
\leq &(S-\gamma)\left[\frac{(b-a)^2}{48}+ \frac{b-a}{4}\left|x-\frac{3a+b}{4}\right|\right]
\end{align}
and
\begin{align}\label{2.15}
&\left|\frac{f(x)+f(a+b-x)}{2}-\left(x-\frac{3a+b}{4}\right)\frac{f'(x)-f'(a+b-x)}{2}\right.\nonumber\\
&\left.+\frac{f'(b)-f'(a)}{b-a}\left[\frac{1}{2}\left(x-\frac{3a+b}{4}\right)^2+\frac{(b-a)^2}{96}\right]-\frac{1}{b-a}\int_{a}^{b}f(t)dt\right|\nonumber\\
\leq & (\Gamma-S)\left[\frac{(b-a)^2}{48}+ \frac{b-a}{4}\left|x-\frac{3a+b}{4}\right|\right],
\end{align}
where $S=(f'(b)-f'(a))/(b-a)$.
\end{theorem}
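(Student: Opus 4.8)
The plan is to observe that the expression inside the absolute value in both \eqref{2.14} and \eqref{2.15} is, up to sign, the very same Chebyshev (Gr\"uss) functional that drove Theorem \ref{th2.1}; only the estimate applied to it will differ. Set $S=\frac{f'(b)-f'(a)}{b-a}$ and $\bar K=\frac{1}{b-a}\int_a^b K(x,t)\,dt$, and write
\[ T:=\frac{1}{b-a}\int_a^b K(x,t)f''(t)\,dt-\bar K\cdot\frac{1}{b-a}\int_a^b f''(t)\,dt. \]
By \eqref{2.6} we have $S=\frac{1}{b-a}\int_a^b f''(t)\,dt$, by \eqref{2.7} we have $\bar K=\frac12\left(x-\frac{3a+b}{4}\right)^2+\frac{(b-a)^2}{96}$, and then identity \eqref{2.2} shows that the bracketed quantity on the left of \eqref{2.14}--\eqref{2.15} equals $-T$. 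Hence both claims reduce to estimating $|T|$, and the idea is to replace the symmetric bound of Lemma \ref{lem2.2} by a one-sided argument that keeps track of the mean value $S$.

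Next I would use $0\le K(x,t)\le M$ with $M=\frac12\left[\frac{b-a}{4}+\left|x-\frac{3a+b}{4}\right|\right]^2$, the upper bound for the kernel already obtained in the proof of Theorem \ref{th2.1}. Since $\int_a^b\bigl(f''(t)-S\bigr)\,dt=0$, the functional admits the two exact representations
\[ T=\frac{1}{b-a}\int_a^b K(x,t)\bigl(f''(t)-S\bigr)\,dt=-\frac{1}{b-a}\int_a^b\bigl(M-K(x,t)\bigr)\bigl(f''(t)-S\bigr)\,dt, \]
whose weights $K$ and $M-K$ are both nonnegative. Inserting the two-sided bound $-(S-\gamma)\le f''(t)-S\le\Gamma-S$ into each representation and integrating against the nonnegative weight yields the four one-sided estimates
\[ -(S-\gamma)\bar K\le T\le(\Gamma-S)\bar K,\qquad -(\Gamma-S)(M-\bar K)\le T\le(S-\gamma)(M-\bar K). \]

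Finally I would assemble the two inequalities. The bound $T\le(S-\gamma)(M-\bar K)$ (needed for \eqref{2.14}) and the bound $T\ge-(\Gamma-S)(M-\bar K)$ (needed for \eqref{2.15}) already sit in the second group. For the two remaining sides I would invoke the elementary fact $\bar K\le M-\bar K$: it turns $T\ge-(S-\gamma)\bar K$ into $T\ge-(S-\gamma)(M-\bar K)$, completing \eqref{2.14}, and turns $T\le(\Gamma-S)\bar K$ into $T\le(\Gamma-S)(M-\bar K)$, completing \eqref{2.15}. I expect the only genuine obstacle to be checking $\bar K\le M-\bar K$, i.e. $2\bar K\le M$: writing $d=\left|x-\frac{3a+b}{4}\right|$ this reads $-\frac12 d^2+\frac{b-a}{4}d+\frac{(b-a)^2}{96}\ge0$, a concave quadratic in $d$ that is positive at both endpoints of the admissible range $0\le d\le\frac{b-a}{4}$ (forced by $x\in[a,\frac{a+b}{2}]$) and hence, being concave, positive throughout. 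As a bonus this identifies $M-\bar K=\frac{(b-a)^2}{48}+\frac{b-a}{4}\left|x-\frac{3a+b}{4}\right|$ with the bracket on the right-hand sides of \eqref{2.14} and \eqref{2.15}, matching the desired form.
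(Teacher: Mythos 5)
Your proof is correct, but it takes a genuinely different route from the paper's. The paper bounds the Chebyshev functional $R_n(x)$ via the $\sup$--$L^1$ pairing \eqref{2.20}: it computes $\max_{t\in[a,b]}\left|K(x,t)-\frac{1}{b-a}\int_a^bK(x,s)\,ds\right|$ explicitly through the three-way case analysis \eqref{2.21}--\eqref{2.34}, and multiplies by $\int_a^b|f''(t)-C|\,dt$ with $C=\gamma$ (resp.\ $C=\Gamma$), which equals $(S-\gamma)(b-a)$ (resp.\ $(\Gamma-S)(b-a)$) because $f''-\gamma$ and $f''-\Gamma$ are one-signed. You instead exploit that $f''-S$ has zero mean to represent $T$ against the two nonnegative weights $K$ and $M-K$ and feed in the pointwise bounds $-(S-\gamma)\le f''-S\le\Gamma-S$; the extra ingredient you need, $2\bar K\le M$, is precisely what makes $M-\bar K$ coincide with $\max_t|K(x,t)-\bar K|$, so the two arguments land on the same constant $\frac{(b-a)^2}{48}+\frac{b-a}{4}\left|x-\frac{3a+b}{4}\right|$. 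Your version trades the maximum computation \eqref{2.21}--\eqref{2.34} for an easy concave-quadratic check and in fact yields sharper one-sided information (e.g.\ $T\le(S-\gamma)(M-\bar K)$ but $T\ge-(S-\gamma)\bar K$ separately); the only details left implicit are $S-\gamma\ge0$ and $\Gamma-S\ge0$, immediate since $S$ is the mean value of $f''$ over $[a,b]$.
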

\begin{proof}
From \eqref{2.2}, \eqref{2.6}, \eqref{2.7}, it follows that
\begin{align}\label{2.17}
&\frac{1}{b-a}\int_{a}^{b}K(x,t)f''(t)dt-\frac{1}{(b-a)^2}\int_{a}^{b}K(x,t)dt\int_{a}^{b}f''(t)dt\nonumber\\
=&\frac{1}{b-a}\int_{a}^{b}f(t)dt-\frac{f(x)+f(a+b-x)}{2}+\left(x-\frac{3a+b}{4}\right)\frac{f'(x)-f'(a+b-x)}{2}\nonumber\\
&-\frac{f'(b)-f'(a)}{b-a}\left[\frac{1}{2}\left(x-\frac{3a+b}{4}\right)^2+\frac{(b-a)^2}{96}\right].
\end{align}
We denote
\begin{equation}\label{2.18}
R_{n}(x)=\frac{1}{b-a}\int_{a}^{b}K(x,t)f''(t)dt-\frac{1}{(b-a)^2}\int_{a}^{b}K(x,t)dt\int_{a}^{b}f''(t)dt.
\end{equation}
If $C\in\mathbb{R}$ is an arbitrary constant, then we have
\begin{equation}\label{2.19}
R_{n}(x)=\frac{1}{b-a}\int_{a}^{b}(f''(t)-C)\left[K(x,t)-\frac{1}{b-a}\int_{a}^{b}K(x,s)ds\right]dt.
\end{equation}
Furthermore, we have
\begin{align}\label{2.20}
|R_{n}(x)|\leq\frac{1}{b-a}\max\limits_{t\in[a,b]}\left|K(x,t)-\frac{1}{b-a}\int_{a}^{b}K(x,s)ds\right|\int_{a}^{b}|f''(t)-C|dt.
\end{align}

To compute
\begin{align}\label{2.21}
&\max\limits_{t\in[a,b]}\left|K(x,t)-\frac{1}{b-a}\int_{a}^{b}K(x,s)ds\right|\nonumber\\
=&\max\left\{\left|\frac{1}{2}(x-a)^2-\left[\frac{1}{2}\left(x-\frac{3a+b}{4}\right)^2+\frac{(b-a)^2}{96}\right]\right|,\right.\nonumber\\
&\left.\left|\frac{1}{2}\left(\frac{a+b}{2}-x\right)^2-\left[\frac{1}{2}\left(x-\frac{3a+b}{4}\right)^2+\frac{(b-a)^2}{96}\right]\right|, \left[\frac{1}{2}\left(x-\frac{3a+b}{4}\right)^2+\frac{(b-a)^2}{96}\right]\right\}\nonumber\\
=&\max\left\{\frac{b-a}{24}|6x-5a-b|,  \frac{b-a}{12}|3x-2a-b|, \left[\frac{1}{2}\left(x-\frac{3a+b}{4}\right)^2+\frac{(b-a)^2}{96}\right]\right\},
\end{align}
we  denote
\begin{align*}
y_{1}=\frac{b-a}{24}|6x-5a-b|,\
y_{2}=\frac{b-a}{12}|3x-2a-b|,\
y_{3}=\frac{1}{2}\left(x-\frac{3a+b}{4}\right)^2+\frac{(b-a)^2}{96}.
\end{align*}
%
%
%
%
%
A direct computation gives that
\begin{align}
\left\{{\begin{aligned}&y_{2}\geq \max\{y_{1}, y_3\}, &&x\in\left[a,\frac{3a+b}{4}\right],\\
&y_{1}>\max\{y_{2}, y_3\}, &&x\in\left(\frac{3a+b}{4},\frac{a+b}{2}\right].\\
\end{aligned}}\right.
\end{align}
Therefore, we get
\begin{align}\label{2.34}
\max\limits_{t\in[a,b]}\left|K(x,t)-\frac{1}{b-a}\int_{a}^{b}K(x,s)ds\right|
=\max\left\{y_{1},y_{2}\right\}=\left[\frac{(b-a)^2}{48}+ \frac{b-a}{4}\left|x-\frac{3a+b}{4}\right|\right].
\end{align}

We also have
\begin{equation}\label{2.22}
\int_{a}^{b}|f''(t)-\gamma|dt=(S-\gamma)(b-a)
\end{equation}
and
\begin{equation}\label{2.23'}
\int_{a}^{b}|f''(t)-\Gamma|dt=(\Gamma-S)(b-a).
\end{equation}
Therefore, we obtain \eqref{2.14} and \eqref{2.15} by using $\eqref{2.17}-\eqref{2.20}$, $\eqref{2.34}-\eqref{2.23'}$ and choosing $C=\gamma$ and $C=\Gamma$ in $\eqref{2.20}$, respectively.
\end{proof}
\begin{corollary}
Under the assumptions of Theorem \ref{th2.3}, choose

$(1)$ $x=\frac{3a+b}{4}$, we have
\begin{align}\label{2.23}
\left|\frac{f(\frac{3a+b}{4})+f(\frac{a+3b}{4})}{2}+\frac{f'(b)-f'(a)}{b-a}\frac{(b-a)^2}{96}-\frac{1}{b-a}\int_{a}^{b}f(t)dt\right|
\leq \frac{1}{48}(S-\gamma)(b-a)^2,
\end{align}
\begin{align}\label{2.24}
\left|\frac{f(\frac{3a+b}{4})+f(\frac{a+3b}{4})}{2}+\frac{f'(b)-f'(a)}{b-a}\frac{(b-a)^2}{96}-\frac{1}{b-a}\int_{a}^{b}f(t)dt\right|
\leq \frac{1}{48}(\Gamma-S)(b-a)^2.
\end{align}

$(2)$ $x=a$, we have
\begin{align*}
\left|\frac{f(a)+f(b)}{2}-\frac{f'(b)-f'(a)}{b-a}\frac{(b-a)^2}{12}-\frac{1}{b-a}\int_{a}^{b}f(t)dt\right|
\leq \frac{1}{12}(S-\gamma)(b-a)^2,
\end{align*}
\begin{align*}
\left|\frac{f(a)+f(b)}{2}-\frac{f'(b)-f'(a)}{b-a}\frac{(b-a)^2}{12}-\frac{1}{b-a}\int_{a}^{b}f(t)dt\right|
\leq \frac{1}{12}(\Gamma-S)(b-a)^2.
\end{align*}

$(3)$ $x=\frac{a+b}{2}$, we have
\begin{align*}
 \left|f\left(\frac{a+b}{2}\right)+\frac{f'(b)-f'(a)}{b-a}\frac{(b-a)^2}{24}-\frac{1}{b-a}\int_{a}^{b}f(t)dt\right|
\leq \frac{1}{12}(S-\gamma)(b-a)^2,
\end{align*}
\begin{align*}
 \left|f\left(\frac{a+b}{2}\right)+\frac{f'(b)-f'(a)}{b-a}\frac{(b-a)^2}{24}-\frac{1}{b-a}\int_{a}^{b}f(t)dt\right|
\leq \frac{1}{12}(\Gamma-S)(b-a)^3.
\end{align*}
\end{corollary}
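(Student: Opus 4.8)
The plan is to derive the corollary directly from the two master inequalities \eqref{2.14} and \eqref{2.15} of Theorem \ref{th2.3} by substituting each of the three prescribed values of $x$ and simplifying; no new analytic ingredient is required beyond Theorem \ref{th2.3} itself. The right-hand sides of \eqref{2.14} and \eqref{2.15} share the common factor $\left[\frac{(b-a)^2}{48}+\frac{b-a}{4}\left|x-\frac{3a+b}{4}\right|\right]$, multiplied by $(S-\gamma)$ and $(\Gamma-S)$ respectively, so the first quantities I would record at each node are $\left|x-\frac{3a+b}{4}\right|$ together with the value of the recurring bracket $\frac{1}{2}\left(x-\frac{3a+b}{4}\right)^2+\frac{(b-a)^2}{96}$ on the left.

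First I would take $x=\frac{3a+b}{4}$. Since $x-\frac{3a+b}{4}=0$, the linear term $\left(x-\frac{3a+b}{4}\right)\frac{f'(x)-f'(a+b-x)}{2}$ vanishes, the left bracket reduces to $\frac{(b-a)^2}{96}$, and the right-hand factor collapses to $\frac{(b-a)^2}{48}$; recording also that $a+b-x=\frac{a+3b}{4}$, these substitutions carry \eqref{2.14} and \eqref{2.15} straight into \eqref{2.23} and \eqref{2.24}.

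Next I would dispatch $x=a$ and $x=\frac{a+b}{2}$ together, since for both $\left|x-\frac{3a+b}{4}\right|=\frac{b-a}{4}$ and $\left(x-\frac{3a+b}{4}\right)^2=\frac{(b-a)^2}{16}$. Hence the left bracket equals $\frac{(b-a)^2}{24}$ in each case, while the right-hand factor becomes $\frac{(b-a)^2}{48}+\frac{b-a}{4}\cdot\frac{b-a}{4}=\frac{(b-a)^2}{12}$, yielding the constant $\frac{1}{12}$. For $x=\frac{a+b}{2}$ the symmetry $a+b-x=x$ forces $f'(x)-f'(a+b-x)=0$ and $\frac{f(x)+f(a+b-x)}{2}=f\!\left(\frac{a+b}{2}\right)$, so the stated $x=\frac{a+b}{2}$ inequalities drop out immediately.

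The only step needing genuine care, and the one I regard as the sole obstacle, is $x=a$: here the linear term does \emph{not} vanish, and with $x-\frac{3a+b}{4}=-\frac{b-a}{4}$ and $a+b-x=b$ it contributes $\frac{b-a}{8}\bigl(f'(a)-f'(b)\bigr)$, which must be combined with the coefficient term $\frac{f'(b)-f'(a)}{b-a}\cdot\frac{(b-a)^2}{24}=\frac{b-a}{24}\bigl(f'(b)-f'(a)\bigr)$. Summing these gives $-\frac{b-a}{12}\bigl(f'(b)-f'(a)\bigr)$, i.e.\ precisely the term $-\frac{f'(b)-f'(a)}{b-a}\cdot\frac{(b-a)^2}{12}$ appearing in the $x=a$ inequalities. (The exponent $(b-a)^3$ in the last displayed line is a typographical slip for $(b-a)^2$.)
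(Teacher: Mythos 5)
Your proposal is correct and coincides with the paper's (implicit) argument: the corollary follows by direct substitution of $x=\frac{3a+b}{4}$, $x=a$, and $x=\frac{a+b}{2}$ into \eqref{2.14} and \eqref{2.15}, and your arithmetic at each node — including the careful combination of the non-vanishing linear term at $x=a$ into $-\frac{f'(b)-f'(a)}{b-a}\cdot\frac{(b-a)^2}{12}$ — checks out. You are also right that the exponent $(b-a)^3$ in the final display of the corollary is a typographical error for $(b-a)^2$.
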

\begin{corollary}
Let $f$ as in Theorem \ref{th2.3}. Additionally, if $f$ is symmetric about $x=\frac{a+b}{2}$,   then for all $x\in\left[a,\frac{a+b}{2}\right]$ we have
\begin{align*}
&\left|f(x)-\left(x-\frac{3a+b}{4}\right)f'(x)+\frac{f'(b)-f'(a)}{b-a}\left[\frac{1}{2}\left(x-\frac{3a+b}{4}\right)^2+\frac{(b-a)^2}{96}\right]-\frac{1}{b-a}\int_{a}^{b}f(t)dt\right|\nonumber\\
\leq & (S-\gamma)\left[\frac{(b-a)^2}{48}+ \frac{b-a}{4}\left|x-\frac{3a+b}{4}\right|\right]
\end{align*}
and
\begin{align*}
&\left|f(x)-\left(x-\frac{3a+b}{4}\right)f'(x)+\frac{f'(b)-f'(a)}{b-a}\left[\frac{1}{2}\left(x-\frac{3a+b}{4}\right)^2+\frac{(b-a)^2}{96}\right]-\frac{1}{b-a}\int_{a}^{b}f(t)dt\right|\nonumber\\
\leq & (\Gamma-S)\left[\frac{(b-a)^2}{48}+ \frac{b-a}{4}\left|x-\frac{3a+b}{4}\right|\right].
\end{align*}
\end{corollary}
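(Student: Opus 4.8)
The plan is to derive this corollary directly from Theorem~\ref{th2.3} by specializing the general inequalities \eqref{2.14} and \eqref{2.15} under the additional symmetry hypothesis $f(a+b-x)=f(x)$. Since $f$ is assumed to satisfy all the hypotheses of Theorem~\ref{th2.3}, both \eqref{2.14} and \eqref{2.15} already hold for every $x\in\left[a,\frac{a+b}{2}\right]$; the only task is to rewrite the two $f$-dependent terms on the left-hand sides using the symmetry, leaving the perturbation terms, the integral mean, and the right-hand sides untouched.

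First I would use the symmetry relation $f(a+b-x)=f(x)$ to collapse the average term, namely
\[
\frac{f(x)+f(a+b-x)}{2}=f(x).
\]
Next, since $f'$ is absolutely continuous and in particular $f$ is differentiable on $[a,b]$, I would differentiate the identity $f(a+b-x)=f(x)$ with respect to $x$. The chain rule gives $-f'(a+b-x)=f'(x)$, that is $f'(a+b-x)=-f'(x)$, whence
\[
\frac{f'(x)-f'(a+b-x)}{2}=\frac{f'(x)+f'(x)}{2}=f'(x).
\]
Consequently the term $\left(x-\frac{3a+b}{4}\right)\frac{f'(x)-f'(a+b-x)}{2}$ reduces to $\left(x-\frac{3a+b}{4}\right)f'(x)$.

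Finally I would substitute these two simplifications into \eqref{2.14} and \eqref{2.15}. The perturbation factor $\frac{f'(b)-f'(a)}{b-a}\left[\frac{1}{2}\left(x-\frac{3a+b}{4}\right)^2+\frac{(b-a)^2}{96}\right]$, the integral $\frac{1}{b-a}\int_a^b f(t)\,dt$, and both right-hand sides contain no term affected by the symmetry and are carried over verbatim, so the two displayed inequalities of the corollary follow at once. I do not expect a genuine obstacle here; the only point deserving a moment's care is the differentiation of the symmetry relation, which is legitimate precisely because $f$ is differentiable on $[a,b]$, together with the observation that the quantity $S=(f'(b)-f'(a))/(b-a)$ and the bounds $\gamma\le f''\le\Gamma$ are unaffected by the symmetry and thus remain the relevant data in both estimates.
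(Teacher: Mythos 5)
Your proposal is correct and is exactly the intended derivation: the paper states this corollary without a separate proof because it follows from Theorem~\ref{th2.3} by the same two substitutions you make, namely $\frac{f(x)+f(a+b-x)}{2}=f(x)$ and, after differentiating the symmetry relation, $\frac{f'(x)-f'(a+b-x)}{2}=f'(x)$. Your remark that the differentiation step is justified by the differentiability of $f$ (guaranteed since $f'$ is absolutely continuous) is the only point needing care, and you handle it properly.
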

\begin{theorem}\label{th2.4}
Let $f:[a,b]\rightarrow\mathbb{R}$ be a thrice continuously differentiable mapping in $(a,b)$ with $f'''\in L^{2}[a,b]$. Then for all $x\in\left[a,\frac{a+b}{2}\right]$ we have

\begin{align}\label{2.29}
&\left|\frac{f(x)+f(a+b-x)}{2}-\left(x-\frac{3a+b}{4}\right)\frac{f'(x)-f'(a+b-x)}{2}\right.\nonumber\\
&\left.+\frac{f'(b)-f'(a)}{b-a}\left[\frac{1}{2}\left(x-\frac{3a+b}{4}\right)^2+\frac{(b-a)^2}{96}\right]-\frac{1}{b-a}\int_{a}^{b}f(t)dt\right|\nonumber\\
\leq&\frac{1}{\pi}\|f'''\|_{2}\left\{\frac{1}{320}(a+b-2x)^5+\frac{1}{10}(x-a)^5-(b-a)\left[\frac{1}{2}\left(x-\frac{3a+b}{4}\right)^2+\frac{(b-a)^2}{96}\right]^2\right\}^{1/2}. \end{align}
\end{theorem}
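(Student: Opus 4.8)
The plan is to reuse the identity already extracted in the proof of Theorem \ref{th2.3}, so that no new computation of the error representation is needed. Indeed, \eqref{2.17}--\eqref{2.18} show that the quantity inside the absolute value on the left-hand side of \eqref{2.29} is exactly $\pm R_n(x)$, where
\begin{equation*}
R_n(x)=\frac{1}{b-a}\int_a^b K(x,t)f''(t)\,dt-\frac{1}{b-a}\int_a^b K(x,t)\,dt\cdot\frac{1}{b-a}\int_a^b f''(t)\,dt
\end{equation*}
is the Chebyshev functional $T(K(x,\cdot),f'')$ of the kernel $K(x,\cdot)$ from \eqref{2.1} paired with $f''$. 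Thus it suffices to bound $|R_n(x)|$, and all of \eqref{2.2}, \eqref{2.6}, \eqref{2.7} carry over verbatim.

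The heart of the argument is to estimate this Chebyshev functional in the $L^2$ setting, rather than in the $L^\infty$/$L^1$ settings used in the previous theorems. First I would apply the Cauchy--Schwarz (pre-Gr\"uss) inequality for the Chebyshev functional,
\begin{equation*}
|T(h,g)|\leq \bigl[T(h,h)\bigr]^{1/2}\bigl[T(g,g)\bigr]^{1/2},
\end{equation*}
with $h=K(x,\cdot)$ and $g=f''$. Then I would bound $T(f'',f'')$ by the sharp Wirtinger/Poincar\'e-type inequality: writing $u=f''-\frac{1}{b-a}\int_a^b f''$, which has mean zero, one has $\int_a^b u^2\leq \frac{(b-a)^2}{\pi^2}\int_a^b (u')^2$, whence
\begin{equation*}
T(f'',f'')=\frac{1}{b-a}\int_a^b u^2\,dt\leq \frac{b-a}{\pi^2}\int_a^b (f'''(t))^2\,dt=\frac{b-a}{\pi^2}\|f'''\|_2^2 .
\end{equation*}
This is precisely where the constant $\frac{1}{\pi}$ and the factor $\|f'''\|_2$ in \eqref{2.29} originate, and it explains why the hypothesis is $f'''\in L^2[a,b]$.

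It then remains to evaluate $T(K(x,\cdot),K(x,\cdot))=\frac{1}{b-a}\int_a^b K^2(x,t)\,dt-\bigl(\frac{1}{b-a}\int_a^b K(x,t)\,dt\bigr)^2$. The second term is read off from \eqref{2.7}. For the first, I would integrate $K^2$ piecewise over the three subintervals of \eqref{2.1}: on $[a,x]$ and on $(a+b-x,b]$ each contributes $\frac{(x-a)^5}{20}$, while on $(x,a+b-x]$ the symmetric substitution $s=t-\frac{a+b}{2}$ gives $\frac{(a+b-2x)^5}{320}$, so that
\begin{equation*}
\int_a^b K^2(x,t)\,dt=\frac{1}{10}(x-a)^5+\frac{1}{320}(a+b-2x)^5 .
\end{equation*}
Combining the pre-Gr\"uss bound with the Wirtinger estimate yields $|R_n(x)|\leq \frac{1}{\pi}\|f'''\|_2\bigl[\int_a^b K^2-(b-a)\bigl(\frac{1}{b-a}\int_a^b K\bigr)^2\bigr]^{1/2}$, and substituting the last display together with \eqref{2.7} produces exactly the right-hand side of \eqref{2.29}.

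The routine part is the piecewise integration of $K^2$ and the final algebraic matching. The only genuine obstacle is recognizing that the correct tool here is the pairing of the Cauchy--Schwarz (pre-Gr\"uss) inequality with the sharp Wirtinger constant $\pi^2/(b-a)^2$ for mean-zero functions, since it is this---and not a crude $\max|K|$ or $\int|K|$ estimate as in Theorems \ref{th2.1}--\ref{th2.3}---that yields the factor $\frac{1}{\pi}$ and the subtracted square $(b-a)\bigl(\frac{1}{2}(x-\frac{3a+b}{4})^2+\frac{(b-a)^2}{96}\bigr)^2$ appearing under the root in \eqref{2.29}.
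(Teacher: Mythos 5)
Your proposal is correct and follows the same skeleton as the paper's proof --- the identity \eqref{2.17}--\eqref{2.19} for $R_n(x)$, a Cauchy--Schwarz step, a Wirtinger-type inequality producing the factor $\frac{1}{\pi}$, and the piecewise evaluation of $\int_a^b K^2(x,t)\,dt$, which matches \eqref{2.28'} exactly. The one genuine difference lies in the choice of centering constant $C$ in \eqref{2.19} and the corresponding $\pi$-inequality. The paper takes $C=f''\left(\frac{a+b}{2}\right)$ and invokes the Diaz--Metcalf inequality $\int_a^b\left(f''(t)-f''\left(\frac{a+b}{2}\right)\right)^2dt\leq\frac{(b-a)^2}{\pi^2}\|f'''\|_2^2$, i.e.\ a Wirtinger inequality for a function vanishing at the midpoint. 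You take $C=\frac{1}{b-a}\int_a^b f''$, which turns the bound into the pre-Gr\"uss estimate $|T(K,f'')|\leq [T(K,K)]^{1/2}[T(f'',f'')]^{1/2}$, and then control $T(f'',f'')$ by the sharp Neumann--Poincar\'e (mean-zero Wirtinger) inequality with the same constant $\frac{(b-a)^2}{\pi^2}$. Both routes yield identical right-hand sides. What your version buys is a structural observation the paper does not make explicit: since $T(f'',f'')=\sigma(f'')/(b-a)$ in the notation of Theorem \ref{th2.5}, your argument exhibits Theorem \ref{th2.4} as an immediate consequence of Theorem \ref{th2.5} together with the Wirtinger inequality $\sigma(f'')\leq\frac{(b-a)^2}{\pi^2}\|f'''\|_2^2$; what the paper's choice buys is that it needs only the pointwise value $f''\left(\frac{a+b}{2}\right)$ rather than the mean, which is a marginally more self-contained appeal to a named inequality. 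All the supporting computations in your write-up (the sharp constant $L^2/\pi^2$ for mean-zero functions, and $\int_a^b K^2=\frac{1}{10}(x-a)^5+\frac{1}{320}(a+b-2x)^5$ together with \eqref{2.7}) check out.
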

\begin{proof}
Let $R_n(x)$ be defined by \eqref{2.18}. From \eqref{2.17}, we get
\begin{align}
R_{n}(x)=&\frac{1}{b-a}\int_{a}^{b}f(t)dt-\frac{f(x)+f(a+b-x)}{2}+\left(x-\frac{3a+b}{4}\right)\frac{f'(x)-f'(a+b-x)}{2}\nonumber\\
&-\frac{f'(b)-f'(a)}{b-a}\left[\frac{1}{2}\left(x-\frac{3a+b}{4}\right)^2+\frac{(b-a)^2}{96}\right].
\end{align}
If we choose $C=f''((a+b)/2)$ in \eqref{2.19} and use the Cauchy inequality, then we get
\begin{align}
&|R_{n}(x)|\nonumber\\\leq&\frac{1}{b-a}\int_{a}^{b}\left|f''(t)-f''\left(\frac{a+b}{2}\right)\right|\left|K(x,t)-\frac{1}{b-a}\int_{a}^{b}K(x,s)ds\right|dt\nonumber\\
\leq&\frac{1}{b-a}\left[\int_{a}^{b}\left(f''(t)-f''\left(\frac{a+b}{2}\right)\right)^2dt\right]^{1/2}\left[\int_{a}^{b}\left(K(x,t)-\frac{1}{b-a}\int_{a}^{b}K(x,s)ds\right)^{2}dt\right]^{1/2}. \end{align}
We can use the Diaz-Metcalf inequality to get
\begin{align*}
\int_{a}^{b}\left(f''(t)-f''\left(\frac{a+b}{2}\right)\right)^2dt\leq\frac{(b-a)^2}{\pi^2}\|f'''\|_{2}^{2}.
\end{align*}
We also have
\begin{align}\label{2.28'}
&\int_{a}^{b}\left(K(x,t)-\frac{1}{b-a}\int_{a}^{b}K(x,s)ds\right)^{2}dt\nonumber\\=&\int_{a}^{b}K(x,t)^{2}dt-(b-a)\left[\frac{1}{2}\left(x-\frac{3a+b}{4}\right)^2+\frac{(b-a)^2}{96}\right]^2\nonumber\\
=&\frac{1}{320}(a+b-2x)^5+\frac{1}{10}(x-a)^5-(b-a)\left[\frac{1}{2}\left(x-\frac{3a+b}{4}\right)^2+\frac{(b-a)^2}{96}\right]^2.
\end{align}
Therefore, using the above relations, we obtain \eqref{2.29}.
\end{proof}
\begin{corollary}
Under the assumptions of Theorem \ref{th2.4}, choose

$(1)$ $x=\frac{3a+b}{4}$, we have
\begin{align}\label{2.33}
\left|\frac{f(\frac{3a+b}{4})+f(\frac{a+3b}{4})}{2}+\frac{f'(b)-f'(a)}{b-a}\frac{(b-a)^2}{96}-\frac{1}{b-a}\int_{a}^{b}f(t)dt\right|
\leq\frac{(b-a)^{5/2}}{48\pi\sqrt{5}}\|f'''\|_{2}.
\end{align}

$(2)$ $x=a$, we have
\begin{align*}
\left|\frac{f(a)+f(b)}{2}-\frac{f'(b)-f'(a)}{b-a}\frac{(b-a)^2}{12}-\frac{1}{b-a}\int_{a}^{b}f(t)dt\right|
\leq\frac{(b-a)^{5/2}}{12\pi\sqrt{5}}\|f'''\|_{2}.
\end{align*}

$(3)$ $x=\frac{a+b}{2}$, we have
\begin{align*}
 \left|f\left(\frac{a+b}{2}\right)+\frac{f'(b)-f'(a)}{b-a}\frac{(b-a)^2}{24}-\frac{1}{b-a}\int_{a}^{b}f(t)dt\right|
 \leq\frac{(b-a)^{5/2}}{12\pi\sqrt{5}}\|f'''\|_{2}.
 \end{align*}
\end{corollary}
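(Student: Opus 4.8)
The plan is to obtain all three inequalities by direct substitution of the respective values of $x$ into the general estimate \eqref{2.29} of Theorem \ref{th2.4} and then simplifying both sides; no new analytic input is needed, and the only work is bookkeeping of the algebra. First I would record the two quantities that recur throughout. For the bracket appearing in \eqref{2.29} I would note that at $x=\frac{3a+b}{4}$ it collapses to $\frac{(b-a)^2}{96}$ (since $x-\frac{3a+b}{4}=0$), while at both $x=a$ and $x=\frac{a+b}{2}$ one has $\bigl(x-\frac{3a+b}{4}\bigr)^2=\frac{(b-a)^2}{16}$, so the bracket equals $\frac{(b-a)^2}{32}+\frac{(b-a)^2}{96}=\frac{(b-a)^2}{24}$.

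For the left-hand side I would treat each point in turn. At $x=\frac{3a+b}{4}$ the factor $x-\frac{3a+b}{4}$ annihilates the derivative-difference term, $a+b-x=\frac{a+3b}{4}$, and the bracket term becomes $\frac{f'(b)-f'(a)}{b-a}\cdot\frac{(b-a)^2}{96}$, yielding exactly the expression inside the modulus in \eqref{2.33}. At $x=a$ one has $a+b-x=b$, $x-\frac{3a+b}{4}=-\frac{b-a}{4}$, and $f'(x)-f'(a+b-x)=f'(a)-f'(b)$; the two terms carrying $f'(b)-f'(a)$ then combine as $(f'(b)-f'(a))(b-a)\bigl(-\frac18+\frac1{24}\bigr)=-\frac{f'(b)-f'(a)}{b-a}\cdot\frac{(b-a)^2}{12}$, which is case $(2)$. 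At $x=\frac{a+b}{2}$ symmetry gives $f(a+b-x)=f(x)=f\bigl(\frac{a+b}{2}\bigr)$ and $f'(x)-f'(a+b-x)=0$, so only $f\bigl(\frac{a+b}{2}\bigr)$ and the single bracket term $\frac{f'(b)-f'(a)}{b-a}\cdot\frac{(b-a)^2}{24}$ survive, which is case $(3)$.

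The only mildly delicate part is the radicand in \eqref{2.29}. At $x=\frac{3a+b}{4}$ one gets $a+b-2x=\frac{b-a}{2}$ and $x-a=\frac{b-a}{4}$, so the three terms are $\frac{1}{320}\cdot\frac{(b-a)^5}{32}$, $\frac{1}{10}\cdot\frac{(b-a)^5}{1024}$, and $(b-a)\bigl(\frac{(b-a)^2}{96}\bigr)^2$; the first two are each $\frac{(b-a)^5}{10240}$, and after subtracting $\frac{(b-a)^5}{9216}$ the radicand reduces to $\frac{(b-a)^5}{11520}$, whose square root is $\frac{(b-a)^{5/2}}{48\sqrt5}$, giving \eqref{2.33}. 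For $x=a$ the term $(x-a)^5$ vanishes and the radicand is $\frac{(b-a)^5}{320}-(b-a)\bigl(\frac{(b-a)^2}{24}\bigr)^2=\frac{(b-a)^5}{720}$; for $x=\frac{a+b}{2}$ instead $a+b-2x=0$ vanishes and the radicand is $\frac{1}{10}\cdot\frac{(b-a)^5}{32}-(b-a)\bigl(\frac{(b-a)^2}{24}\bigr)^2$, which again equals $\frac{(b-a)^5}{720}$.

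In both of the latter cases $\sqrt{720}=12\sqrt5$ produces the common bound $\frac{(b-a)^{5/2}}{12\pi\sqrt5}\|f'''\|_2$. The main obstacle is thus purely arithmetic: keeping the fractions straight so that the two polynomial pieces (whichever survives) combine with the subtracted square term to the displayed constants $\frac{1}{11520}$ and $\frac{1}{720}$ under the root. Once these reductions are carried out, the three inequalities of the corollary follow immediately, and I would simply remark that each is the corresponding specialization of \eqref{2.29}.
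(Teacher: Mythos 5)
Your proposal is correct and coincides with the paper's (implicit) argument: the corollary is exactly the specialization of \eqref{2.29} at $x=\frac{3a+b}{4}$, $x=a$, and $x=\frac{a+b}{2}$, and your arithmetic — the bracket values $\frac{(b-a)^2}{96}$ and $\frac{(b-a)^2}{24}$, the radicands $\frac{(b-a)^5}{11520}$ and $\frac{(b-a)^5}{720}$, and the roots $48\sqrt{5}$, $12\sqrt{5}$ — all checks out. (Only a phrasing nit: at $x=\frac{a+b}{2}$ one has $a+b-x=x$, so $f(a+b-x)=f(x)$ holds because the two points coincide, not by any symmetry hypothesis on $f$.)
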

\begin{corollary}
Let $f$ as in Theorem \ref{th2.3}. Additionally, if $f$ is symmetric about $x=\frac{a+b}{2}$, i.e., $f(a+b-x)=f(x)$, then for all $x\in\left[a,\frac{a+b}{2}\right]$ we have
\begin{align*}
&\left|f(x)-\left(x-\frac{3a+b}{4}\right)f'(x)+\frac{f'(b)-f'(a)}{b-a}\left[\frac{1}{2}\left(x-\frac{3a+b}{4}\right)^2+\frac{(b-a)^2}{96}\right]-\frac{1}{b-a}\int_{a}^{b}f(t)dt\right|\nonumber\\
\leq&\frac{1}{\pi}\|f'''\|_{2}\left\{\frac{1}{320}(a+b-2x)^5+\frac{1}{10}(x-a)^5-(b-a)\left[\frac{1}{2}\left(x-\frac{3a+b}{4}\right)^2+\frac{(b-a)^2}{96}\right]^2\right\}^{1/2}. \end{align*}
\end{corollary}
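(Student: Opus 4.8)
The plan is to identify the bracketed quantity on the left of \eqref{2.29} with the Gr\"uss-type remainder $R_n(x)$ of \eqref{2.18} and then estimate $R_n(x)$ by the Cauchy--Schwarz inequality instead of the uniform bound used for Theorem~\ref{th2.3}. First I would combine the kernel identity \eqref{2.2} of Lemma~\ref{lem2.1} with the elementary evaluations \eqref{2.6} and \eqref{2.7}: inserting them into
\[
R_n(x)=\frac{1}{b-a}\int_a^b K(x,t)f''(t)\,dt-\frac{1}{(b-a)^2}\int_a^b K(x,t)\,dt\int_a^b f''(t)\,dt
\]
shows that $R_n(x)$ equals, up to a sign that is immaterial under the modulus, the expression whose absolute value stands on the left-hand side of \eqref{2.29}. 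Hence it is enough to bound $|R_n(x)|$.

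Next I would pass to the mean-zero form \eqref{2.19}: for any constant $C$,
\[
R_n(x)=\frac{1}{b-a}\int_a^b\bigl(f''(t)-C\bigr)\left[K(x,t)-\frac{1}{b-a}\int_a^b K(x,s)\,ds\right]dt,
\]
where $C$ drops out because the bracketed factor has zero mean. I would then make the crucial choice $C=f''\!\left(\tfrac{a+b}{2}\right)$ and apply Cauchy--Schwarz, so that $|R_n(x)|$ is dominated by $(b-a)^{-1}$ times the product of $\left[\int_a^b\bigl(f''(t)-f''(\tfrac{a+b}{2})\bigr)^2dt\right]^{1/2}$ and $\left[\int_a^b\bigl(K(x,t)-\bar K\bigr)^2dt\right]^{1/2}$, where $\bar K=\tfrac12\bigl(x-\tfrac{3a+b}{4}\bigr)^2+\tfrac{(b-a)^2}{96}$ is the average value from \eqref{2.7}.

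The two factors are then treated separately. For the first, the function $g(t)=f''(t)-f''(\tfrac{a+b}{2})$ vanishes at the midpoint and satisfies $g'=f'''$, so the Diaz--Metcalf (Wirtinger-type) inequality gives $\int_a^b g^2\,dt\le\frac{(b-a)^2}{\pi^2}\|f'''\|_2^2$; taking the square root produces $\frac{b-a}{\pi}\|f'''\|_2$, and one power of $b-a$ cancels the prefactor to leave the constant $\frac1\pi$. For the second factor I would compute $\int_a^b K(x,t)^2\,dt$ directly from the three pieces of \eqref{2.1}: the two outer quadratic pieces each integrate to $\frac{(x-a)^5}{20}$ and the central piece to $\frac{(a+b-2x)^5}{320}$, whence, using $\int_a^b(K-\bar K)^2=\int_a^b K^2-(b-a)\bar K^2$,
\[
\int_a^b\bigl(K(x,t)-\bar K\bigr)^2\,dt=\frac{1}{320}(a+b-2x)^5+\frac{1}{10}(x-a)^5-(b-a)\bar K^2.
\]
Substituting both estimates into the Cauchy--Schwarz bound yields \eqref{2.29}.

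I expect the main obstacle to be the first factor, specifically the justification of the constant $\frac{(b-a)^2}{\pi^2}$. The centering constant must be taken to be the midpoint value $f''(\tfrac{a+b}{2})$ precisely so that $g$ vanishes at the centre of $[a,b]$; splitting the integral at the midpoint then reduces matters to two subintervals of length $\tfrac{b-a}{2}$ on each of which $g$ vanishes at an endpoint, and the sharp Wirtinger constant $\frac{4((b-a)/2)^2}{\pi^2}=\frac{(b-a)^2}{\pi^2}$ applies on each. Any other choice of $C$ would either prevent $g$ from vanishing or enlarge this constant, so the efficiency of the final inequality hinges on this choice. By comparison, the explicit evaluation of $\int_a^b K^2\,dt$ is only a careful but routine computation.
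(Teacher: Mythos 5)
Your argument is a correct and essentially complete derivation of the bound \eqref{2.29} of Theorem~\ref{th2.4} --- indeed it reproduces the paper's own proof of that theorem: identifying the left-hand side with $R_n(x)$ via \eqref{2.17}, choosing $C=f''\!\left(\frac{a+b}{2}\right)$ in \eqref{2.19}, applying Cauchy--Schwarz, invoking the Diaz--Metcalf (Wirtinger) estimate with constant $(b-a)^2/\pi^2$, and evaluating $\int_a^b K(x,t)^2\,dt$ as in \eqref{2.28'} are all exactly the steps the paper takes, and your computations (the two outer pieces contributing $\frac{1}{20}(x-a)^5$ each, the central piece $\frac{1}{320}(a+b-2x)^5$, and the halving argument for the Wirtinger constant) check out. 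The gap is that the statement you were asked to prove is the \emph{symmetric corollary}, and its entire content beyond Theorem~\ref{th2.4} is the reduction of the left-hand side under the hypothesis $f(a+b-x)=f(x)$ --- a hypothesis you never invoke. What is missing is the observation that symmetry gives $\frac{f(x)+f(a+b-x)}{2}=f(x)$ and, after differentiating the identity $f(a+b-x)=f(x)$ with respect to $x$, that $f'(a+b-x)=-f'(x)$, hence $\frac{f'(x)-f'(a+b-x)}{2}=f'(x)$; only then does the left-hand side of \eqref{2.29} collapse to the left-hand side of the corollary. As written, the inequality you establish is not the inequality in the statement. The repair is one line, but it is precisely the step that distinguishes the corollary from the theorem. (A side remark: the corollary as printed says ``$f$ as in Theorem~\ref{th2.3}'', yet your proof correctly requires the stronger hypotheses of Theorem~\ref{th2.4}, namely $f$ thrice differentiable with $f'''\in L^2[a,b]$, since the Diaz--Metcalf step needs $f'''$; this appears to be a slip in the paper's statement rather than in your argument.)
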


\begin{theorem}\label{th2.5}
Let $f:[a,b]\rightarrow\mathbb{R}$ be such that $f'$ is absolutely continuous on $[a,b]$ with $f''\in L^{2}[a,b]$. Then for all $x\in\left[a,\frac{a+b}{2}\right]$ we have
\begin{align}\label{2.36}
&\left|\frac{f(x)+f(a+b-x)}{2}-\left(x-\frac{3a+b}{4}\right)\frac{f'(x)-f'(a+b-x)}{2}\right.\nonumber\\
&\left.+\frac{f'(b)-f'(a)}{b-a}\left[\frac{1}{2}\left(x-\frac{3a+b}{4}\right)^2+\frac{(b-a)^2}{96}\right]-\frac{1}{b-a}\int_{a}^{b}f(t)dt\right|\nonumber\\
\leq&\frac{\sqrt{\sigma(f'')}}{b-a}\left\{\frac{1}{320}(a+b-2x)^5+\frac{1}{10}(x-a)^5-(b-a)\left[\frac{1}{2}\left(x-\frac{3a+b}{4}\right)^2+\frac{(b-a)^2}{96}\right]^2\right\}^{1/2},  \end{align}
where $\sigma(f'')$ is defined by
\begin{align}
\sigma(f'')=\|f''\|_{2}^{2}-\frac{(f'(b)-f'(a))^2}{b-a}=\|f''\|_{2}^{2}-S^2(b-a)
\end{align}
and $S$ is defined in Theorem \ref{th2.3}.
\end{theorem}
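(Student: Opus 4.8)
The plan is to mirror the proof of Theorem \ref{th2.4}: start from the remainder $R_n(x)$ introduced in \eqref{2.18}, write it in the form \eqref{2.19}, and apply the Cauchy--Schwarz inequality. The crucial difference is the choice of the free constant $C$ in \eqref{2.19}. Whereas Theorem \ref{th2.4} took $C=f''((a+b)/2)$ and then invoked the Diaz--Metcalf inequality, here I would take $C=S=(f'(b)-f'(a))/(b-a)$, which by \eqref{2.6} is precisely the mean value $\frac{1}{b-a}\int_a^b f''(t)\,dt$. By \eqref{2.17}, the expression inside the absolute value on the left of \eqref{2.36} is $-R_n(x)$, so it suffices to estimate $|R_n(x)|$.

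First I would insert $C=S$ into \eqref{2.19} and apply the Cauchy--Schwarz inequality to the integrand, obtaining
\begin{align*}
|R_n(x)|\le\frac{1}{b-a}\left[\int_a^b\bigl(f''(t)-S\bigr)^2dt\right]^{1/2}\left[\int_a^b\left(K(x,t)-\frac{1}{b-a}\int_a^bK(x,s)ds\right)^2dt\right]^{1/2}.
\end{align*}
The reason for selecting $C=S$ is that $S$ is exactly the average of $f''$ over $[a,b]$, and the average is the minimizer of $C\mapsto\int_a^b(f''-C)^2dt$; expanding the square and using $\int_a^b f''(t)\,dt=(b-a)S$ collapses the first factor to the variance-type quantity
\begin{align*}
\int_a^b\bigl(f''(t)-S\bigr)^2dt=\|f''\|_2^2-(b-a)S^2=\sigma(f''),
\end{align*}
which is exactly the quantity appearing under the root in the numerator of \eqref{2.36}.

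For the second factor I would simply quote the computation already carried out in the proof of Theorem \ref{th2.4}: by \eqref{2.28'},
\begin{align*}
\int_a^b\left(K(x,t)-\frac{1}{b-a}\int_a^bK(x,s)ds\right)^2dt=\frac{1}{320}(a+b-2x)^5+\frac{1}{10}(x-a)^5-(b-a)\left[\frac{1}{2}\left(x-\frac{3a+b}{4}\right)^2+\frac{(b-a)^2}{96}\right]^2.
\end{align*}
Substituting both factors into the Cauchy--Schwarz bound produces \eqref{2.36} immediately, the $\frac{1}{b-a}$ prefactor combining with $\sqrt{\sigma(f'')}$ to give the stated constant.

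I do not expect a genuine obstacle. The only nontrivial integral, that of $(K(x,\cdot)-\frac{1}{b-a}\int_a^bK(x,s)ds)^2$, is already supplied by \eqref{2.28'}, and the identification of the first factor with $\sigma(f'')$ is a one-line expansion. The single conceptual point worth stressing is that replacing the pointwise center $f''((a+b)/2)$ of Theorem \ref{th2.4} by the mean value $S$ is what upgrades the $L^2$ estimate from one controlled by $\|f'''\|_2$ to the sharper, center-free form controlled by the variance $\sigma(f'')$.
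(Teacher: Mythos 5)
Your proposal is correct and follows essentially the same route as the paper: the paper's proof also takes $C=\frac{1}{b-a}\int_a^b f''(s)\,ds$ (which is $S$ by \eqref{2.6}) in \eqref{2.19}, applies the Cauchy inequality, and quotes \eqref{2.28'} for the kernel factor, with the first factor collapsing to $\sigma(f'')$ exactly as you describe.
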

\begin{proof}
Let $R_{n}(x)$ be defined by \eqref{2.18}.
If we choose $C=\frac{1}{b-a}\int_{a}^{b}f''(s)ds$ in \eqref{2.19} and use the Cauchy inequality and \eqref{2.28'}, then we get
\begin{align*}
&|R_{n}(x)|\nonumber\\\leq &\frac{1}{b-a}\int_{a}^{b}\left|f''(t)-\frac{1}{b-a}\int_{a}^{b}f''(s)ds\right|\left|K(x,t)-\frac{1}{b-a}\int_{a}^{b}K(x,s)ds\right|dt\nonumber\\
\leq &\frac{1}{b-a}\left[\int_{a}^{b}\left(f''(t)-\frac{1}{b-a}\int_{a}^{b}f''(s)ds\right)^2dt\right]^{1/2}\left[\int_{a}^{b}\left(K(x,t)-\frac{1}{b-a}\int_{a}^{b}K(x,s)ds\right)^{2}dt\right]^{1/2}\nonumber\\
\leq&\frac{\sqrt{\sigma(f'')}}{b-a}\left\{\frac{1}{320}(a+b-2x)^5+\frac{1}{10}(x-a)^5-(b-a)\left[\frac{1}{2}\left(x-\frac{3a+b}{4}\right)^2+\frac{(b-a)^2}{96}\right]^2\right\}^{1/2}.
\end{align*}
\end{proof}

\begin{corollary}
Under the assumptions of Theorem \ref{th2.5}, choose

$(1)$ $x=\frac{3a+b}{4}$, we have
\begin{align}\label{2.38}
\left|\frac{f(\frac{3a+b}{4})+f(\frac{a+3b}{4})}{2}+\frac{f'(b)-f'(a)}{b-a}\frac{(b-a)^2}{96}-\frac{1}{b-a}\int_{a}^{b}f(t)dt\right|
\leq\frac{(b-a)^{3/2}}{48\sqrt{5}}\sqrt{\sigma(f'')}.
\end{align}

$(2)$ $x=a$, we have
\begin{align*}
\left|\frac{f(a)+f(b)}{2}-\frac{f'(b)-f'(a)}{b-a}\frac{(b-a)^2}{12}-\frac{1}{b-a}\int_{a}^{b}f(t)dt\right|
\leq\frac{(b-a)^{3/2}}{12 \sqrt{5}}\sqrt{\sigma(f'')}.
\end{align*}

$(3)$ $x=\frac{a+b}{2}$, we have
\begin{align*}
 \left|f\left(\frac{a+b}{2}\right)+\frac{f'(b)-f'(a)}{b-a}\frac{(b-a)^2}{24}-\frac{1}{b-a}\int_{a}^{b}f(t)dt\right|
 \leq\frac{(b-a)^{3/2}}{12 \sqrt{5}}\sqrt{\sigma(f'')}.
 \end{align*}
\end{corollary}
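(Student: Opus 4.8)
The plan is to obtain all three inequalities by direct specialization of the general bound \eqref{2.36} from Theorem \ref{th2.5}; no new analytic input is needed, so the task reduces to evaluating the quantity inside the braces at each of the three prescribed points. For a fixed $x$, write
\[
B(x)=\frac{1}{2}\left(x-\frac{3a+b}{4}\right)^2+\frac{(b-a)^2}{96},
\]
so that the right-hand side of \eqref{2.36} reads
\[
\frac{\sqrt{\sigma(f'')}}{b-a}\left\{\frac{1}{320}(a+b-2x)^5+\frac{1}{10}(x-a)^5-(b-a)B(x)^2\right\}^{1/2}.
\]
First I would record, for each of the three choices, the three elementary quantities $a+b-2x$, $x-a$, and $B(x)$.

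For $x=\frac{3a+b}{4}$ one finds $a+b-2x=\frac{b-a}{2}$, $x-a=\frac{b-a}{4}$, and $B(x)=\frac{(b-a)^2}{96}$; for $x=a$ one finds $a+b-2x=b-a$, $x-a=0$, and $B(x)=\frac{(b-a)^2}{24}$; and for $x=\frac{a+b}{2}$ one finds $a+b-2x=0$, $x-a=\frac{b-a}{2}$, and again $B(x)=\frac{(b-a)^2}{24}$. Substituting these values, every term inside the braces becomes an explicit rational multiple of $(b-a)^5$, and after combining the fractions over a common denominator the brace collapses to $\frac{(b-a)^5}{11520}$ in case $(1)$ and to $\frac{(b-a)^5}{720}$ in cases $(2)$ and $(3)$.

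It then remains to take the square root and divide by $b-a$. Extracting the factor $(b-a)^5$ gives $\frac{\sqrt{\sigma(f'')}}{b-a}\cdot\frac{(b-a)^{5/2}}{\sqrt{N}}=\frac{(b-a)^{3/2}}{\sqrt{N}}\sqrt{\sigma(f'')}$, where $N=11520$ in case $(1)$ and $N=720$ in cases $(2)$ and $(3)$. The only mildly delicate point is simplifying these radicals: recognizing that $11520=48^2\cdot 5$ and $720=12^2\cdot 5$ yields $\sqrt{11520}=48\sqrt{5}$ and $\sqrt{720}=12\sqrt{5}$, which produce exactly the constants $\frac{(b-a)^{3/2}}{48\sqrt{5}}$ of \eqref{2.38} and $\frac{(b-a)^{3/2}}{12\sqrt{5}}$ of parts $(2)$ and $(3)$. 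The main obstacle is therefore purely arithmetic---careful bookkeeping of the fifth-power coefficients over a common denominator and the factorization of $N$ as a perfect square times $5$---rather than conceptual.
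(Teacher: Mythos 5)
Your proposal is correct and matches the paper's (implicit) treatment exactly: the corollary is stated there without proof as an immediate specialization of \eqref{2.36}, which is precisely what you carry out. Your evaluations of the three quantities and the resulting brace values $\frac{(b-a)^5}{11520}$ and $\frac{(b-a)^5}{720}$, together with the factorizations $11520=48^2\cdot 5$ and $720=12^2\cdot 5$, all check out, and the left-hand sides collapse to the stated forms under the same substitutions.
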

\begin{corollary}
Let $f$ as in Theorem \ref{th2.3}. Additionally, if $f$ is symmetric about $x=\frac{a+b}{2}$,  then for all $x\in\left[a,\frac{a+b}{2}\right]$ we have
\begin{align*}
&\left|f(x)-\left(x-\frac{3a+b}{4}\right)f'(x)+\frac{f'(b)-f'(a)}{b-a}\left[\frac{1}{2}\left(x-\frac{3a+b}{4}\right)^2+\frac{(b-a)^2}{96}\right]-\frac{1}{b-a}\int_{a}^{b}f(t)dt\right|\nonumber\\
\leq&\frac{\sqrt{\sigma(f'')}}{b-a}\left\{\frac{1}{320}(a+b-2x)^5+\frac{1}{10}(x-a)^5-(b-a)\left[\frac{1}{2}\left(x-\frac{3a+b}{4}\right)^2+\frac{(b-a)^2}{96}\right]^2\right\}^{1/2}. \end{align*}
\end{corollary}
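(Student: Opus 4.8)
The plan is to recognize the left-hand side of \eqref{2.36} as $|R_n(x)|$, where $R_n(x)$ is the functional introduced in \eqref{2.18}, and then to estimate it by a single application of the Cauchy--Schwarz inequality with an optimally chosen centering constant. First I would note that the identity \eqref{2.17} (which itself follows from Lemma \ref{lem2.1} together with the elementary evaluations \eqref{2.6} and \eqref{2.7}) shows that $R_n(x)$ is exactly the signed version of the expression sitting inside the absolute value bars, so it suffices to bound $|R_n(x)|$. The standing hypotheses ($f'$ absolutely continuous, $f''\in L^2[a,b]$) guarantee that every integral below is finite.

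Next I would invoke the centered representation \eqref{2.19}, valid for an arbitrary constant $C$, namely
\[
R_n(x)=\frac{1}{b-a}\int_a^b\bigl(f''(t)-C\bigr)\left[K(x,t)-\frac{1}{b-a}\int_a^b K(x,s)\,ds\right]dt,
\]
which holds precisely because the bracketed kernel has mean zero over $[a,b]$. The decisive move is to take $C=\frac{1}{b-a}\int_a^b f''(s)\,ds$, i.e. the mean value of $f''$; this is the constant minimizing $\|f''-C\|_2$, and it is what turns the first factor below into a genuine variance. Applying Cauchy--Schwarz to the product of $f''-C$ against the centered kernel then gives
\[
|R_n(x)|\le\frac{1}{b-a}\left[\int_a^b\bigl(f''(t)-C\bigr)^2dt\right]^{1/2}\left[\int_a^b\Bigl(K(x,t)-\tfrac{1}{b-a}\int_a^b K(x,s)\,ds\Bigr)^2dt\right]^{1/2}.
\]

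It then remains to evaluate the two factors. For the first, expanding the square and using $\int_a^b f''=f'(b)-f'(a)$ yields
\[
\int_a^b\bigl(f''(t)-C\bigr)^2\,dt=\|f''\|_2^2-\frac{(f'(b)-f'(a))^2}{b-a}=\sigma(f''),
\]
which is exactly the variance-type quantity appearing in the statement; this identification of a raw second moment with $\sigma(f'')$ is the one genuinely new ingredient, the remainder being assembly. For the second factor I would simply cite the computation \eqref{2.28'} already performed in the proof of Theorem \ref{th2.4}, which expresses it as $\frac{1}{320}(a+b-2x)^5+\frac{1}{10}(x-a)^5-(b-a)\bigl[\tfrac12(x-\tfrac{3a+b}{4})^2+\tfrac{(b-a)^2}{96}\bigr]^2$. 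Substituting both evaluations into the Cauchy--Schwarz bound produces \eqref{2.36}.

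The only computationally heavy step is the second $L^2$ factor, obtained by integrating the square of the piecewise kernel $K(x,\cdot)$ over its three subintervals and subtracting the square of its mean; but since this is precisely \eqref{2.28'}, established already for Theorem \ref{th2.4}, the present argument reduces to the optimal centering choice plus one use of Cauchy--Schwarz. I expect the conceptual heart, rather than any obstacle, to be the recognition of the first factor as $\sigma(f'')$, mirroring the passage from a second moment to a variance in the probabilistic applications that motivate the paper.
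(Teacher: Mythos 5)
What you have written is, almost verbatim, the paper's proof of Theorem \ref{th2.5} itself: the centered representation \eqref{2.19} with $C=\frac{1}{b-a}\int_a^b f''(s)\,ds$, a single application of Cauchy--Schwarz, the identification $\int_a^b\bigl(f''(t)-C\bigr)^2dt=\|f''\|_2^2-\frac{(f'(b)-f'(a))^2}{b-a}=\sigma(f'')$, and the kernel computation \eqref{2.28'}. All of that is correct and is exactly how the paper obtains \eqref{2.36}. But the statement you were asked to prove is the \emph{corollary}, not the theorem, and your argument never invokes the one hypothesis that distinguishes the two: the symmetry $f(a+b-x)=f(x)$. The quantity you bound is $\frac{f(x)+f(a+b-x)}{2}-\bigl(x-\frac{3a+b}{4}\bigr)\frac{f'(x)-f'(a+b-x)}{2}+\cdots$, whereas the corollary's left-hand side is $f(x)-\bigl(x-\frac{3a+b}{4}\bigr)f'(x)+\cdots$; as written, your proof establishes a bound on a different expression.

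The missing step is short but genuinely required. From $f(a+b-x)=f(x)$ for all $x$ you get $\frac{f(x)+f(a+b-x)}{2}=f(x)$, and differentiating the symmetry relation (legitimate since $f'$ is assumed absolutely continuous, hence defined everywhere) gives $f'(a+b-x)=-f'(x)$, so $\frac{f'(x)-f'(a+b-x)}{2}=f'(x)$. Substituting these two identities into \eqref{2.36} converts its left-hand side into the corollary's and leaves the right-hand side untouched, which finishes the argument. Add that observation and your proof is complete; it then coincides with the paper's route, which is simply to cite Theorem \ref{th2.5} and specialize under symmetry.
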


\section{Application to Composite Quadrature Rules}

Let $I_{n}: a=x_{0}<x_{1}<\cdot\cdot\cdot<x_{n-1}<x_{n}=b$ be a
partition of the interval $[a,b]$ and $h_{i}=x_{i+1}-x_{i}$
$(i=0,1,2,\cdot\cdot\cdot,n-1)$.

Consider the perturbed composite quadrature rules
\begin{equation}
Q_{n}^1(I_{n},f)=\frac{1}{2}\sum_{i=0}^{n-1}\left[f\left(\frac{3x_{i}+x_{i+1}}{4}\right)+f\left(\frac{x_{i}+3x_{i+1}}{4}\right)\right]h_{i}
+\sum_{i=0}^{n-1}\frac{f'(x_{i+1})-f'(x_{i})}{96}h_{i}^2\label{3.1}
\end{equation}
and
\begin{equation}
Q_{n}^2(I_{n},f)=\frac{1}{2}\sum_{i=0}^{n-1}\left[f\left(\frac{3x_{i}+x_{i+1}}{4}\right)+f\left(\frac{x_{i}+3x_{i+1}}{4}\right)\right]h_{i}
+\frac{\Gamma+\gamma}{192}\sum_{i=0}^{n-1}h_{i}^3. \label{3.2}
\end{equation}
The following result holds.
\begin{theorem}\label{th3.1}
Let $f:[a,b]\rightarrow\mathbb{R}$ be such that $f'$ is absolutely continuous on $[a,b]$. If $f''\in L^1[a,b]$ and $\gamma\leq f''(x)\leq\Gamma, \forall\ x\in [a,b]$, then for all $x\in\left[a,\frac{a+b}{2}\right]$ we have
\begin{equation*}
\int_{a}^{b}f(t)dt=Q_{n}^1(I_{n},f)+R_{n}^1(I_{n},f),
\end{equation*}
where $Q_{n}^1(I_{n},f)$ is defined by formula \eqref{3.1}, and the remainder $R_{n}^1(I_{n},f)$ satisfies the estimate
\begin{equation}\label{3.3}
|R_{n}^1(I_{n},f)|\leq\frac{\Gamma-\gamma}{128}\sum_{i=0}^{n-1}h_{i}^3.
\end{equation}
\end{theorem}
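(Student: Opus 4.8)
The plan is to localize the corollary of Theorem \ref{th2.1} to each subinterval of the partition and then sum. The key input is the specialization \eqref{2.8'} (the case $x=\frac{3a+b}{4}$), which bounds the error of the two-node perturbed rule on a generic interval. First I would restrict attention to a single subinterval $[x_i,x_{i+1}]$ and apply \eqref{2.8'} with $a$ replaced by $x_i$ and $b$ by $x_{i+1}$, so that $b-a$ becomes $h_i$. Since the bounds $\gamma\le f''\le\Gamma$ hold on all of $[a,b]$, they hold in particular on each $[x_i,x_{i+1}]$, and \eqref{2.8'} gives
\begin{equation*}
\left|\frac{f\!\left(\frac{3x_i+x_{i+1}}{4}\right)+f\!\left(\frac{x_i+3x_{i+1}}{4}\right)}{2}+\frac{f'(x_{i+1})-f'(x_i)}{h_i}\cdot\frac{h_i^2}{96}-\frac{1}{h_i}\int_{x_i}^{x_{i+1}}f(t)\,dt\right|\le\frac{\Gamma-\gamma}{128}\,h_i^2.
\end{equation*}

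Next I would multiply this estimate through by $h_i>0$, which clears the factor $1/h_i$ in front of the integral and turns the right-hand side into $\frac{\Gamma-\gamma}{128}h_i^3$. The perturbation term becomes $\frac{f'(x_{i+1})-f'(x_i)}{96}h_i^2$ and the quadrature term becomes $\frac12\bigl[f(\tfrac{3x_i+x_{i+1}}{4})+f(\tfrac{x_i+3x_{i+1}}{4})\bigr]h_i$, exactly matching the summands in \eqref{3.1}. Summing over $i=0,1,\dots,n-1$ and invoking the additivity of the integral, $\sum_{i=0}^{n-1}\int_{x_i}^{x_{i+1}}f(t)\,dt=\int_a^b f(t)\,dt$, the collected quadrature and perturbation terms reproduce precisely $Q_n^1(I_n,f)$. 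This identifies the remainder as $R_n^1(I_n,f)=\int_a^b f(t)\,dt-Q_n^1(I_n,f)$.

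Finally I would pass the sum inside the absolute value by the triangle inequality to obtain
\begin{equation*}
|R_n^1(I_n,f)|\le\sum_{i=0}^{n-1}\frac{\Gamma-\gamma}{128}\,h_i^3=\frac{\Gamma-\gamma}{128}\sum_{i=0}^{n-1}h_i^3,
\end{equation*}
which is exactly \eqref{3.3}. The argument is a standard localization-and-summation scheme, so no genuine obstacle arises beyond bookkeeping; the only point demanding care is the correct scaling of \eqref{2.8'} from $[a,b]$ to $[x_i,x_{i+1}]$ and the subsequent multiplication by $h_i$, which must be tracked so that the perturbation term and the cubic error bound $\frac{\Gamma-\gamma}{128}h_i^3$ emerge with the stated constants.
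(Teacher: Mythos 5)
Your proposal is correct and follows essentially the same route as the paper: the paper likewise applies \eqref{2.8'} on each subinterval $[x_i,x_{i+1}]$ (with the rescaling $b-a\mapsto h_i$ you describe), sums over $i$, and concludes by the triangle inequality. Your explicit tracking of the multiplication by $h_i$ that converts the averaged estimate into the cubic bound $\frac{\Gamma-\gamma}{128}h_i^3$ is exactly the bookkeeping the paper performs implicitly.
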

\begin{proof}
Applying inequality \eqref{2.8'} to the intervals $[x_{i},x_{i+1}]$, then we get
\begin{align*}
&\left|\int_{x_{i}}^{x_{i+1}}f(t)dt-\frac{1}{2}\left[f\left(\frac{3x_{i}+x_{i+1}}{4}\right)+f\left(\frac{x_{i}+3x_{i+1}}{4}\right)\right]h_{i}
-\frac{f'(x_{i+1})-f'(x_{i})}{96}h_{i}^2\right|\nonumber\\
\leq&\frac{\Gamma-\gamma}{128}  h_{i}^3
\end{align*}
for $i=0,1,2,\cdot\cdot\cdot,n-1.$
Now summing over $i$ from $0$ to $n-1$  and using the triangle
inequality,  we get \eqref{3.3}.
\end{proof}

\begin{theorem}\label{th3.2}
Let $f:[a,b]\rightarrow\mathbb{R}$ be such that $f'$ is absolutely continuous on $[a,b]$. If $f''\in L^1[a,b]$ and $\gamma\leq f''(x)\leq\Gamma, \forall\ x\in [a,b]$, then for all $x\in\left[a,\frac{a+b}{2}\right]$ we have
\begin{equation*}
\int_{a}^{b}f(t)dt=Q_{n}^2(I_{n},f)+R_{n}^2(I_{n},f),
\end{equation*}
where $Q_{n}^2(I_{n},f)$ is defined by formula \eqref{3.2}, and the remainder $R_{n}^2(I_{n},f)$ satisfies the estimate
\begin{equation}\label{3.4}
|R_{n}^2(I_{n},f)|\leq\frac{\Gamma-\gamma}{192}\sum_{i=0}^{n-1}h_{i}^3.
\end{equation}
\end{theorem}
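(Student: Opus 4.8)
The plan is to follow the proof of Theorem \ref{th3.1} line for line, replacing the single-interval estimate \eqref{2.8'} by its perturbed counterpart \eqref{2.12'}, which is the $x=\frac{3a+b}{4}$ specialization of Theorem \ref{th2.2}. First I would apply \eqref{2.12'} on each subinterval $[x_i,x_{i+1}]$ of the partition $I_n$. The hypotheses that $f'$ be absolutely continuous, $f''\in L^1$, and $\gamma\le f''\le\Gamma$ hold on all of $[a,b]$, hence a fortiori on each $[x_i,x_{i+1}]$, so \eqref{2.12'} applies there with $b-a$ replaced by $h_i$. This yields, for each $i=0,1,\dots,n-1$,
\begin{align*}
\left|\frac{f(\frac{3x_i+x_{i+1}}{4})+f(\frac{x_i+3x_{i+1}}{4})}{2}+\frac{\Gamma+\gamma}{2}\cdot\frac{h_i^2}{96}-\frac{1}{h_i}\int_{x_i}^{x_{i+1}}f(t)\,dt\right|\le\frac{\Gamma-\gamma}{192}\,h_i^2.
\end{align*}

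Next I would multiply through by $h_i$ to clear the factor $1/h_i$ in front of the integral, so that the left-hand side becomes the $i$-th summand of $Q_n^2(I_n,f)$ minus $\int_{x_i}^{x_{i+1}}f$, while the bound becomes $\frac{\Gamma-\gamma}{192}h_i^3$. Summing over $i$ from $0$ to $n-1$, invoking the additivity $\sum_{i}\int_{x_i}^{x_{i+1}}=\int_a^b$, and applying the triangle inequality, the accumulated left-hand term is precisely $Q_n^2(I_n,f)-\int_a^b f(t)\,dt$ by the definition \eqref{3.2}. Setting $R_n^2(I_n,f)=\int_a^b f(t)\,dt-Q_n^2(I_n,f)$ then delivers exactly the estimate \eqref{3.4}.

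I do not anticipate any genuine obstacle: the statement is the direct composite analogue of Theorem \ref{th3.1}. The only point that needs a moment of care is the scaling $b-a\mapsto h_i$ inside \eqref{2.12'}, which sends the perturbation $\frac{\Gamma+\gamma}{2}\cdot\frac{(b-a)^2}{96}$ to $\frac{\Gamma+\gamma}{192}h_i^2$; after multiplication by $h_i$ this produces the cubic correction $\frac{\Gamma+\gamma}{192}\sum_i h_i^3$ built into $Q_n^2$, and simultaneously turns the right-hand side into $\frac{\Gamma-\gamma}{192}\sum_i h_i^3$, matching \eqref{3.4}.
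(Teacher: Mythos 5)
Your proposal is correct and follows exactly the paper's own argument: apply the single-interval inequality \eqref{2.12'} on each $[x_i,x_{i+1}]$ (with $b-a$ rescaled to $h_i$), multiply by $h_i$, sum, and use the triangle inequality. The scaling check at the end, sending $\frac{\Gamma+\gamma}{2}\cdot\frac{(b-a)^2}{96}$ to $\frac{\Gamma+\gamma}{192}h_i^3$ after the multiplication, is exactly what makes the perturbation term match the definition of $Q_n^2$ in \eqref{3.2}.
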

\begin{proof}
Applying inequality \eqref{2.12'} to the intervals $[x_{i},x_{i+1}]$, then we get
\begin{align*}
&\left|\int_{x_{i}}^{x_{i+1}}f(t)dt-\frac{1}{2}\left[f\left(\frac{3x_{i}+x_{i+1}}{4}\right)+f\left(\frac{x_{i}+3x_{i+1}}{4}\right)\right]h_{i}
-\frac{\Gamma+\gamma}{192} h_{i}^3\right|\nonumber\\
\leq&\frac{\Gamma-\gamma}{192}  h_{i}^3
\end{align*}
for $i=0,1,2,\cdot\cdot\cdot,n-1.$
Now summing over $i$ from $0$ to $n-1$  and using the triangle
inequality,  we get \eqref{3.4}.
\end{proof}

\begin{theorem}\label{th3.3}
Let $f:[a,b]\rightarrow\mathbb{R}$ be such that $f'$ is absolutely continuous on $[a,b]$.  If $f''\in L^{1}[a,b]$ and $\gamma\leq f''(x)\leq\Gamma$, $\forall\ x\in[a,b]$, then for all $x\in\left[a,\frac{a+b}{2}\right]$ we have
\begin{equation*}
\int_{a}^{b}f(t)dt=Q_{n}^1(I_{n},f)+R_{n}^1(I_{n},f),
\end{equation*}
where $Q_{n}^1(I_{n},f)$ is defined by formula \eqref{3.1}, and the remainder $R_{n}^1(I_{n},f)$ satisfies the estimate
\begin{equation}\label{3.5}
|R_{n}^1(I_{n},f)|\leq\frac{1}{48}(S-\gamma)\sum_{i=0}^{n-1}h_{i}^3
\end{equation}
and
\begin{equation}\label{3.6}
|R_{n}^1(I_{n},f)|\leq\frac{1}{48}(S-\gamma)\sum_{i=0}^{n-1}h_{i}^3.
\end{equation}
\end{theorem}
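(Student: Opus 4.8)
The plan is to follow exactly the template already used in the proofs of Theorems~\ref{th3.1} and~\ref{th3.2}: localize the sharp single-interval estimates~\eqref{2.23} and~\eqref{2.24} (the case $x=\frac{3a+b}{4}$ of Theorem~\ref{th2.3}) to each cell of the partition and then sum. Concretely, I would apply~\eqref{2.23} on the subinterval $[x_i,x_{i+1}]$, making the replacements $a\mapsto x_i$, $b\mapsto x_{i+1}$, $b-a\mapsto h_i$, and reading off the corresponding local slope $S_i:=\frac{f'(x_{i+1})-f'(x_i)}{h_i}$. Since~\eqref{2.23} carries the factor $(b-a)^2$ on its right-hand side while the nodes $\frac{3a+b}{4}$, $\frac{a+3b}{4}$ become $\frac{3x_i+x_{i+1}}{4}$, $\frac{x_i+3x_{i+1}}{4}$, multiplying the whole inequality through by $h_i$ turns the averaged form into
\begin{align*}
&\Bigl|\int_{x_i}^{x_{i+1}}f(t)\,dt-\tfrac12\Bigl[f\bigl(\tfrac{3x_i+x_{i+1}}{4}\bigr)+f\bigl(\tfrac{x_i+3x_{i+1}}{4}\bigr)\Bigr]h_i-\tfrac{f'(x_{i+1})-f'(x_i)}{96}h_i^2\Bigr|\\
&\qquad\leq\tfrac{1}{48}(S_i-\gamma)h_i^3,
\end{align*}
together with the analogous bound with $(\Gamma-S_i)$ coming from~\eqref{2.24}.

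Next I would recognize the bracketed left-hand summands as precisely the terms assembled in the quadrature rule~\eqref{3.1}. Thus, defining $R_n^1(I_n,f):=\int_a^b f(t)\,dt-Q_n^1(I_n,f)$ and splitting $\int_a^b=\sum_{i=0}^{n-1}\int_{x_i}^{x_{i+1}}$, the remainder $R_n^1$ becomes exactly the sum over $i$ of the single-cell errors estimated above. Applying the triangle inequality and the per-cell bounds then gives
\begin{equation*}
|R_n^1(I_n,f)|\leq\frac{1}{48}\sum_{i=0}^{n-1}(S_i-\gamma)h_i^3,
\end{equation*}
and the companion estimate in which each $(S_i-\gamma)$ is replaced by $(\Gamma-S_i)$; these are~\eqref{3.5} and~\eqref{3.6}.

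The one point that needs care, and which I expect to be the sole subtlety, is the meaning of $S$ in the summed bounds. Each cell contributes its own slope $S_i=\frac{f'(x_{i+1})-f'(x_i)}{h_i}$, and $\sum_i S_i h_i^3=\sum_i\bigl(f'(x_{i+1})-f'(x_i)\bigr)h_i^2$ does not collapse to the global slope $S=\frac{f'(b)-f'(a)}{b-a}$ times $\sum_i h_i^3$ unless the mesh is uniform. Hence the factor written as $S$ in~\eqref{3.5}–\eqref{3.6} must be read as the local slope $S_i$ inside the sum, so that the honest statement is $\tfrac{1}{48}\sum_i(S_i-\gamma)h_i^3$, respectively $\tfrac{1}{48}\sum_i(\Gamma-S_i)h_i^3$. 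I would also note that~\eqref{3.6} as printed merely duplicates~\eqref{3.5} and should instead carry $(\Gamma-S_i)$, matching the two-sided structure of Theorem~\ref{th2.3}. Apart from this bookkeeping, no genuine difficulty arises: the result is an immediate additive consequence of the single-interval inequality combined with the triangle inequality.
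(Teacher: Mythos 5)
Your proof follows exactly the paper's argument: apply \eqref{2.23} and \eqref{2.24} on each subinterval $[x_i,x_{i+1}]$ and sum using the triangle inequality. Your two side remarks are also correct and in fact sharper than the paper itself: \eqref{3.6} as printed is a typographical duplicate of \eqref{3.5} and should carry $(\Gamma-S)$, and the factor $S$ in the summed bounds must be understood as the local slope $S_i=\bigl(f'(x_{i+1})-f'(x_i)\bigr)/h_i$ on each cell (these do not collapse to the global $S$ for a nonuniform mesh), a point the paper's proof silently glosses over.
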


\begin{proof}
Applying inequality \eqref{2.23} and \eqref{2.24} to the intervals $[x_{i},x_{i+1}]$, then we get
\begin{align*}
&\left|\int_{x_{i}}^{x_{i+1}}f(t)dt-\frac{1}{2}\left[f\left(\frac{3x_{i}+x_{i+1}}{4}\right)+f\left(\frac{x_{i}+3x_{i+1}}{4}\right)\right]h_{i}
-\frac{f'(x_{i+1})-f'(x_{i})}{96}h_{i}^2\right|\nonumber\\
\leq&\frac{1}{48}(S-\gamma)h_{i}^3
\end{align*}
and
\begin{align*}
&\left|\int_{x_{i}}^{x_{i+1}}f(t)dt-\frac{1}{2}\left[f\left(\frac{3x_{i}+x_{i+1}}{4}\right)+f\left(\frac{x_{i}+3x_{i+1}}{4}\right)\right]h_{i}
-\frac{f'(x_{i+1})-f'(x_{i})}{96}h_{i}^2\right|\nonumber\\
\leq&\frac{1}{48}(\Gamma-S)  h_{i}^3,
\end{align*}
for $i=0,1,2,\cdot\cdot\cdot,n-1.$
Now summing over $i$ from $0$ to $n-1$  and using the triangle
inequality,  we get \eqref{3.5} and \eqref{3.6}.
\end{proof}

\begin{theorem}\label{th3.4}
Let $f:[a,b]\rightarrow\mathbb{R}$ be a thrice continuously differentiable mapping in $(a,b)$ with $f'''\in L^{2}[a,b]$. Then for all $x\in\left[a,\frac{a+b}{2}\right]$ we have
\begin{equation*}
\int_{a}^{b}f(t)dt=Q_{n}^1(I_{n},f)+R_{n}^1(I_{n},f),
\end{equation*}
where $Q_{n}^1(I_{n},f)$ is defined by formula \eqref{3.1}, and the remainder $R_{n}^1(I_{n},f)$ satisfies the estimate
\begin{equation}\label{3.7}
|R_{n}^1(I_{n},f)|\leq\frac{\|f'''\|_{2}}{48\pi\sqrt{5}}\sum_{i=0}^{n-1}h_{i}^{7/2}.
\end{equation}
\end{theorem}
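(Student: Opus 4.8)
The plan is to follow exactly the template already used for Theorems \ref{th3.1}--\ref{th3.3}: apply the single-interval estimate \eqref{2.33} (the $x=\frac{3a+b}{4}$ specialization of Theorem \ref{th2.4}) on each cell $[x_i,x_{i+1}]$ of the partition $I_n$, rescale, sum over $i$, and close with the triangle inequality. The only genuinely new ingredient compared with the earlier quadrature theorems is that the per-cell bound now carries a \emph{local} $L^2$-norm of $f'''$, which must be related to the global norm $\|f'''\|_2$.

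First I would invoke \eqref{2.33} on $[x_i,x_{i+1}]$, i.e.\ replace $a$ by $x_i$, $b$ by $x_{i+1}$, and hence $b-a$ by $h_i$. Writing $\|f'''\|_{2,[x_i,x_{i+1}]}=\bigl(\int_{x_i}^{x_{i+1}}|f'''(t)|^2\,dt\bigr)^{1/2}$ and using $\frac{f'(x_{i+1})-f'(x_i)}{h_i}\cdot\frac{h_i^2}{96}=\frac{f'(x_{i+1})-f'(x_i)}{96}h_i$, this yields
\begin{align*}
&\left|\frac{f(\frac{3x_i+x_{i+1}}{4})+f(\frac{x_i+3x_{i+1}}{4})}{2}+\frac{f'(x_{i+1})-f'(x_i)}{96}h_i-\frac{1}{h_i}\int_{x_i}^{x_{i+1}}f(t)\,dt\right|\\
&\qquad\leq\frac{h_i^{5/2}}{48\pi\sqrt5}\,\|f'''\|_{2,[x_i,x_{i+1}]}.
\end{align*}
Multiplying through by $h_i$ isolates $\int_{x_i}^{x_{i+1}}f$ and produces exactly the $i$-th summand of $Q_n^1(I_n,f)$ from \eqref{3.1}, with a per-cell error bounded by $\frac{h_i^{7/2}}{48\pi\sqrt5}\|f'''\|_{2,[x_i,x_{i+1}]}$.

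Next I would set $R_n^1(I_n,f)=\int_a^b f-Q_n^1(I_n,f)$, use $\int_a^b f=\sum_{i=0}^{n-1}\int_{x_i}^{x_{i+1}}f$, and apply the triangle inequality across the cells to obtain
\begin{equation*}
|R_n^1(I_n,f)|\leq\frac{1}{48\pi\sqrt5}\sum_{i=0}^{n-1}h_i^{7/2}\,\|f'''\|_{2,[x_i,x_{i+1}]}.
\end{equation*}

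The one step needing a word of justification---and the place where a careless argument would slip---is passing from the local norms to the global one. Since $\int_{x_i}^{x_{i+1}}|f'''|^2\leq\int_a^b|f'''|^2$, we have $\|f'''\|_{2,[x_i,x_{i+1}]}\leq\|f'''\|_2$ for every $i$; substituting this termwise gives $\sum_i h_i^{7/2}\|f'''\|_{2,[x_i,x_{i+1}]}\leq\|f'''\|_2\sum_i h_i^{7/2}$ and hence \eqref{3.7}. (A Cauchy--Schwarz split $\sum_i h_i^{7/2}\|f'''\|_{2,[x_i,x_{i+1}]}\leq(\sum_i h_i^{7})^{1/2}\|f'''\|_2$ would yield an alternative, generally sharper, estimate, but the stated bound only requires the crude termwise domination.) This is the main---indeed essentially the only---obstacle, and it is mild; the remainder is bookkeeping identical to the proofs of Theorems \ref{th3.1}--\ref{th3.3}.
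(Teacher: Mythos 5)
Your proposal is correct and follows essentially the same route as the paper: apply the single-interval inequality \eqref{2.33} on each cell $[x_i,x_{i+1}]$, sum, and use the triangle inequality. Your explicit remark that the per-cell $L^2$-norm of $f'''$ over $[x_i,x_{i+1}]$ is dominated by the global norm $\|f'''\|_2$ is a point the paper's proof silently elides, so including it only makes the argument cleaner.
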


\begin{proof}
Applying inequality \eqref{2.33}  to the intervals $[x_{i},x_{i+1}]$, then we get
\begin{align*}
&\left|\int_{x_{i}}^{x_{i+1}}f(t)dt-\frac{1}{2}\left[f\left(\frac{3x_{i}+x_{i+1}}{4}\right)+f\left(\frac{x_{i}+3x_{i+1}}{4}\right)\right]h_{i}
-\frac{f'(x_{i+1})-f'(x_{i})}{96}h_{i}^2\right|\nonumber\\
\leq&\frac{h_{i}^{7/2}}{48\pi\sqrt{5}}\|f'''\|_{2},
\end{align*}
for $i=0,1,2,\cdot\cdot\cdot,n-1.$
Now summing over $i$ from $0$ to $n-1$  and using the triangle
inequality,  we get \eqref{3.7}.
\end{proof}

\begin{theorem}\label{th3.5}
Let $f:[a,b]\rightarrow\mathbb{R}$ be such that $f'$ is absolutely continuous on $[a,b]$ with $f''\in L^{2}[a,b]$. Then for all $x\in\left[a,\frac{a+b}{2}\right]$ we have
\begin{equation*}
\int_{a}^{b}f(t)dt=Q_{n}^1(I_{n},f)+R_{n}^1(I_{n},f),
\end{equation*}
where $Q_{n}^1(I_{n},f)$ is defined by formula \eqref{3.1}, and the remainder $R_{n}^1(I_{n},f)$ satisfies the estimate
\begin{equation}\label{3.8}
|R_{n}^1(I_{n},f)|\leq\frac{\sqrt{\sigma(f'')}}{48\sqrt{5}}\sum_{i=0}^{n-1}h_{i}^{5/2}.
\end{equation}
\end{theorem}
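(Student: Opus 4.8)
The plan is to follow the template already established for Theorems \ref{th3.1}--\ref{th3.4}: localize the one-interval estimate \eqref{2.38} to each subinterval of the partition $I_n$, sum the resulting local error bounds, and control the total by the triangle inequality. Since \eqref{2.38} is precisely the $x=\frac{3a+b}{4}$ instance of Theorem \ref{th2.5}, it is the natural building block here, and the $f''\in L^2$ hypothesis is exactly what that corollary requires.

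First I would apply \eqref{2.38} on $[x_i,x_{i+1}]$, i.e. with $a,b$ replaced by $x_i,x_{i+1}$ and $b-a$ by $h_i$, and then multiply through by $h_i$. Writing
\[
\sigma_i:=\int_{x_i}^{x_{i+1}}\bigl(f''(t)\bigr)^2\,dt-\frac{\bigl(f'(x_{i+1})-f'(x_i)\bigr)^2}{h_i}
\]
for the local analogue of $\sigma(f'')$, this yields
\[
\left|\int_{x_i}^{x_{i+1}}f(t)\,dt-\frac{h_i}{2}\left[f\Bigl(\tfrac{3x_i+x_{i+1}}{4}\Bigr)+f\Bigl(\tfrac{x_i+3x_{i+1}}{4}\Bigr)\right]-\frac{f'(x_{i+1})-f'(x_i)}{96}h_i^2\right|\le\frac{h_i^{5/2}}{48\sqrt5}\sqrt{\sigma_i}
\]
for $i=0,1,\dots,n-1$. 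Summing over $i$ and invoking the triangle inequality recovers the decomposition $\int_a^b f=Q_n^1(I_n,f)+R_n^1(I_n,f)$ with $|R_n^1(I_n,f)|\le\frac{1}{48\sqrt5}\sum_{i=0}^{n-1}h_i^{5/2}\sqrt{\sigma_i}$.

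The step that needs genuine care---and the main obstacle---is replacing the local data $\sigma_i$ by the single global quantity $\sigma(f'')$ appearing in \eqref{3.8}. I would establish that $\sigma_i\le\sigma(f'')$ for every $i$. Summing the definitions gives $\sum_i\sigma_i=\int_a^b(f'')^2-\sum_i\frac{(f'(x_{i+1})-f'(x_i))^2}{h_i}$, so it suffices to prove $\frac{(f'(b)-f'(a))^2}{b-a}\le\sum_i\frac{(f'(x_{i+1})-f'(x_i))^2}{h_i}$. Setting $d_i=f'(x_{i+1})-f'(x_i)$, this is exactly the Cauchy--Schwarz inequality $\bigl(\sum_i d_i\bigr)^2=\bigl(\sum_i \tfrac{d_i}{\sqrt{h_i}}\sqrt{h_i}\bigr)^2\le\bigl(\sum_i\tfrac{d_i^2}{h_i}\bigr)\bigl(\sum_i h_i\bigr)$, using $\sum_i d_i=f'(b)-f'(a)$ and $\sum_i h_i=b-a$. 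Since each $\sigma_i\ge0$ (itself a consequence of Cauchy--Schwarz applied to $f'(x_{i+1})-f'(x_i)=\int_{x_i}^{x_{i+1}}f''$), we conclude $\sigma_i\le\sum_j\sigma_j\le\sigma(f'')$, whence $\sqrt{\sigma_i}\le\sqrt{\sigma(f'')}$ and
\[
|R_n^1(I_n,f)|\le\frac{\sqrt{\sigma(f'')}}{48\sqrt5}\sum_{i=0}^{n-1}h_i^{5/2},
\]
which is \eqref{3.8}. This monotonicity of the variances is precisely the point that makes the clean global bound legitimate; if one follows the paper's apparent convention of quoting \eqref{2.38} verbatim on each subinterval and summing, it is the implicit ingredient that must be supplied.
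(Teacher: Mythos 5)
Your proposal is correct and follows the same overall template as the paper: apply the single-interval estimate \eqref{2.38} on each $[x_i,x_{i+1}]$, sum, and use the triangle inequality. The genuine added value of your write-up is the step the paper silently skips: a verbatim application of \eqref{2.38} to $[x_i,x_{i+1}]$ produces the \emph{local} variance $\sigma_i=\int_{x_i}^{x_{i+1}}(f'')^2\,dt-(f'(x_{i+1})-f'(x_i))^2/h_i$, not the global $\sigma(f'')$, whereas the paper simply writes $\sqrt{\sigma(f'')}$ in the local bound. Your justification that $\sigma_i\le\sigma(f'')$ --- via $\sum_j\sigma_j=\|f''\|_2^2-\sum_j d_j^2/h_j\le\sigma(f'')$ from Cauchy--Schwarz on the increments $d_j$, together with $\sigma_j\ge 0$ from Cauchy--Schwarz on each subinterval --- is exactly the missing ingredient, and both inequalities check out. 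In fact your intermediate bound $|R_n^1(I_n,f)|\le\frac{1}{48\sqrt5}\sum_i h_i^{5/2}\sqrt{\sigma_i}$ is sharper than \eqref{3.8}; passing to $\sqrt{\sigma_i}\le\sqrt{\sigma(f'')}$ only at the end recovers the stated estimate. So your argument is not a different route but a repaired version of the paper's, and the repair is needed.
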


\begin{proof}
Applying inequality \eqref{2.38}  to the intervals $[x_{i},x_{i+1}]$, then we get
\begin{align*}
&\left|\int_{x_{i}}^{x_{i+1}}f(t)dt-\frac{1}{2}\left[f\left(\frac{3x_{i}+x_{i+1}}{4}\right)+f\left(\frac{x_{i}+3x_{i+1}}{4}\right)\right]h_{i}
-\frac{f'(x_{i+1})-f'(x_{i})}{96}h_{i}^2\right|\nonumber\\
\leq&\frac{h_{i}^{5/2}}{48\sqrt{5}}\sqrt{\sigma(f'')},
\end{align*}
for $i=0,1,2,\cdot\cdot\cdot,n-1.$
Now summing over $i$ from $0$ to $n-1$  and using the triangle
inequality,  we get \eqref{3.8}.
\end{proof}

%
In order to compare the error between these two methods and composite trapezoidal formula, we apply the above methods and composite trapezoidal formula to specific examples. For this purpose consider the following Table:

\begin{table}[h] 
\caption{Numerical results}
\centering
\begin{tabular}{crrrrrrrr}
\hline\noalign{\smallskip}
$f(x)$ & $n$ & $[a, b]$ & $\int_{a}^{b}f(x)dx$ & $T_{n}$ & Error of $T_{n}$ & $Q_{n}^{1}$ & $Q_{n}^{2}$ &Error of $Q_{n}^{1}$and $Q_{n}^{2}$ \\
\noalign{\smallskip}\hline\noalign{\smallskip}
  $\cos x-x$ & 20 & $\left[0, \frac{\pi}{2}\right]$ & -0.233701 & -0.234215 & 5.14E-4 &  -0.233636 &  -0.233636 & 6.5E-5\\
  $e^{2x}\cos(e^{x})$ & 20 & $[0, 1]$ & -1.176887 & -1.181466 & 4.579E-3 & -1.176316 & -1.176316 & 5.71E-4 \\
  $\frac{1}{x^4+4x^2+3}$ & 10 & $[0, 1]$ & 0.241549 & 0.241393 & 1.56E-4 & 0.241569 & 0.241569 & 2E-5 \\
  $\tan x+x$ & 20 & $\left[0, \frac{\pi}{4}\right]$ & 0.654999 &0.655127 & 1.28E-4 &0.654983 & 0.654983 & 7E-6 \\
  $\ln(x^2+1)$ & 20 & $[-1, 1]$ & 0.527887 & 0.529554 & 1.667E-3 & 0.527679 &0.527679 & 2.08E-4\\
\noalign{\smallskip}\hline
\end{tabular}
\end{table}

\section{Application to probability density functions}

Now, let $X$ be a random variable taking values in the finite interval
$[a,b]$, with the probability density function $f : [a, b]\rightarrow [0, 1]$ and with
the cumulative distribution function $$F (x) = Pr (X \leq  x) = \int_a^x f (t) dt.$$

The following results hold:
\begin{theorem}\label{th4.1}
With the assumptions of Theorem \ref{th2.1}, we have
\begin{align}\label{4.1}
&\left|\frac{1}{2}[F(x)+F(a+b-x)]-\left(x-\frac{3a+b}{4}\right)\frac{f'(x)-f'(a+b-x)}{2}\right.\nonumber\\
&\left.+\frac{f'(b)-f'(a)}{b-a}\left[\frac{1}{2}\left(x-\frac{3a+b}{4}\right)^2+\frac{(b-a)^2}{96}\right]-\frac{b-E(X)}{b-a}\right|\nonumber\\
\leq&\frac{1}{8}(\Gamma-\gamma)\left[\frac{b-a}{4}+\left|x-\frac{3a+b}{4}\right|\right]^2.
\end{align}
for all $x\in[a,\frac{a+b}{2}]$, where $E (X)$ is the expectation of $X$.
\end{theorem}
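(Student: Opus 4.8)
The plan is to derive \eqref{4.1} by specializing Theorem \ref{th2.1}, applied not to the density but to the cumulative distribution function $F$. The point is that $F$ is one antiderivative smoother than $f$: since $F'=f$ and $F''=f'$, the function $F$ meets the hypotheses required in Theorem \ref{th2.1} (absolute continuity of its first derivative, integrability and two-sided boundedness of its second derivative) precisely because those are assumed of the density. Thus inequality \eqref{2.4} is available with $F$ in the role of the generic integrand, its first derivative being the density $f$ and its second derivative $f'$, the latter being the quantity that is pinched between $\gamma$ and $\Gamma$.

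First I would write \eqref{2.4} out for $F$. The two boundary-evaluation terms combine into $\tfrac12\bigl[F(x)+F(a+b-x)\bigr]$, which is exactly the leading term of \eqref{4.1}; the derivative correction $\bigl(x-\tfrac{3a+b}{4}\bigr)\tfrac{F'(x)-F'(a+b-x)}{2}$ and the weighted term $\tfrac{F'(b)-F'(a)}{b-a}\bigl[\tfrac12(x-\tfrac{3a+b}{4})^2+\tfrac{(b-a)^2}{96}\bigr]$ are produced from $F'=f$; and the Gr\"uss-type right-hand side $\tfrac18(\Gamma-\gamma)\bigl[\tfrac{b-a}{4}+|x-\tfrac{3a+b}{4}|\bigr]^2$ transfers unchanged, since it depends only on the oscillation of the second derivative over $[a,b]$. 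At this stage everything on the left of \eqref{4.1} is already in place except for the averaged integral $\tfrac{1}{b-a}\int_a^b F(t)\,dt$, which Theorem \ref{th2.1} produces but \eqref{4.1} records in the probabilistic form $\tfrac{b-E(X)}{b-a}$.

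The single genuine computation is therefore to reconcile these two expressions. I would integrate by parts,
\[
\int_a^b F(t)\,dt=\bigl[t\,F(t)\bigr]_a^b-\int_a^b t\,f(t)\,dt=bF(b)-aF(a)-E(X),
\]
and then invoke the defining properties of a density supported on $[a,b]$, namely $F(b)=\int_a^b f(t)\,dt=1$ and $F(a)=0$. The boundary contribution collapses to $b$, so that $\int_a^b F(t)\,dt=b-E(X)$ and hence $\tfrac{1}{b-a}\int_a^b F(t)\,dt=\tfrac{b-E(X)}{b-a}$. Substituting this identity into the specialized form of \eqref{2.4} yields \eqref{4.1}.

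I do not anticipate a real obstacle: once the correct substitution ($F$ rather than $f$) is made, the estimate is nothing but Theorem \ref{th2.1} read for $F$, and the only care needed is in the bookkeeping of which object fills which slot (the density $f$ being the \emph{first}, not the second, derivative of the integrand $F$) and in noticing that the normalizations $F(a)=0$ and $F(b)=1$ are exactly what turn the boundary term of the integration by parts into the clean constant $b$. The same device, applied to the inequalities of Theorems \ref{th2.2}--\ref{th2.5}, would furnish the corresponding probabilistic estimates.
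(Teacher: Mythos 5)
Your proposal is correct and follows essentially the same route as the paper: apply \eqref{2.4} with $F$ in place of $f$ and use $E(X)=\int_a^b t\,dF(t)=b-\int_a^b F(t)\,dt$ (your integration by parts, with $F(a)=0$, $F(b)=1$, is exactly the identity the paper invokes). Your bookkeeping observation that the substituted terms involve $F'=f$ and $F''=f'$ is also the right reading of the statement, whose occurrences of $f'$ in the derivative-correction terms should accordingly be understood as $F'=f$ (compare the formulation of Theorem \ref{th4.2}).
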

\begin{proof}  By \eqref{2.4}  on choosing $f = F$ and taking into account
$$E(X)=\int_a^b t dF(t)=b-\int_a^b F(t)dt,$$
we obtain \eqref{4.1}.
\end{proof}

\begin{corollary}
Under the  assumptions of Theorem \ref{th4.1} with $x=\frac{3a+b}{4}$, we have
\begin{align}
 \left|\frac{1}{2}\left[F\left(\frac{3a+b}{4}\right)+F\left(\frac{a+3b}{4}\right)\right]+\frac{b-a}{96}[f'(b)-f'(a)]-\frac{b-E(x)}{b-a}\right|
\leq \frac{\Gamma-\gamma}{128}(b-a)^2.
\end{align}
\end{corollary}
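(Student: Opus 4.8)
The plan is to derive the inequality as the special case $x=\frac{3a+b}{4}$ of the inequality \eqref{4.1} established in Theorem \ref{th4.1}. First I would record the two algebraic facts that drive the simplification: at this choice of $x$ the reflected point satisfies $a+b-x=\frac{a+3b}{4}$, so the symmetric pair of values becomes $F\left(\frac{3a+b}{4}\right)$ and $F\left(\frac{a+3b}{4}\right)$; and the shift $x-\frac{3a+b}{4}$ vanishes identically. These are the only nontrivial substitutions; everything else is bookkeeping.

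Second, I would substitute these into the left-hand side of \eqref{4.1}. The vanishing of $x-\frac{3a+b}{4}$ annihilates the derivative-difference term $\left(x-\frac{3a+b}{4}\right)\frac{f'(x)-f'(a+b-x)}{2}$ entirely, and it also kills the quadratic contribution $\frac{1}{2}\left(x-\frac{3a+b}{4}\right)^2$ inside the bracket multiplying $\frac{f'(b)-f'(a)}{b-a}$. Hence that bracket reduces to $\frac{(b-a)^2}{96}$, and the whole perturbation term collapses to $\frac{f'(b)-f'(a)}{b-a}\cdot\frac{(b-a)^2}{96}=\frac{b-a}{96}[f'(b)-f'(a)]$, which matches the coefficient in the claimed inequality. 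The remaining term $-\frac{b-E(X)}{b-a}$ is unaffected by the substitution.

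Third, I would simplify the right-hand side. With $\left|x-\frac{3a+b}{4}\right|=0$ the bracket $\left[\frac{b-a}{4}+\left|x-\frac{3a+b}{4}\right|\right]$ reduces to $\frac{b-a}{4}$, so its square is $\frac{(b-a)^2}{16}$; multiplying by the prefactor $\frac{1}{8}(\Gamma-\gamma)$ yields $\frac{\Gamma-\gamma}{128}(b-a)^2$, which is exactly the asserted bound. Assembling the simplified left- and right-hand sides then gives precisely the stated inequality.

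The only point requiring care is arithmetic: I expect the sole possible slip to be in confirming that the quadratic perturbation term and the absolute-value term on the right vanish simultaneously at $x=\frac{3a+b}{4}$, and that the constant $\frac{1}{8}\cdot\frac{1}{16}=\frac{1}{128}$ is correctly obtained. There is no analytic obstacle beyond this, since Theorem \ref{th4.1} already furnishes the inequality for arbitrary $x\in\left[a,\frac{a+b}{2}\right]$ and the corollary is a pure specialization.
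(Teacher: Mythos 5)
Your proposal is correct and coincides with the paper's (implicit) argument: the corollary is obtained exactly as you describe, by specializing inequality \eqref{4.1} of Theorem \ref{th4.1} at $x=\frac{3a+b}{4}$, where $a+b-x=\frac{a+3b}{4}$, the terms containing $x-\frac{3a+b}{4}$ vanish, and the bound becomes $\frac{1}{8}(\Gamma-\gamma)\left(\frac{b-a}{4}\right)^2=\frac{\Gamma-\gamma}{128}(b-a)^2$. All the arithmetic checks out, so nothing further is needed.
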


\begin{theorem}\label{th4.2}
With the assumptions of Theorem \ref{th2.2}, we have
\begin{align}\label{4.3}
&\left|\frac{1}{2}[F(x)+F(a+b-x)]-\left(x-\frac{3a+b}{4}\right)\frac{f(x)-f(a+b-x)}{2}\right.\nonumber\\
&\left.+\frac{\Gamma+\gamma}{2}\left[\frac{1}{2}\left(x-\frac{3a+b}{4}\right)^2+\frac{(b-a)^2}{96}\right]-\frac{b-E(X)}{b-a}\right|\nonumber\\
\leq&\frac{\Gamma-\gamma}{2}\left[\frac{1}{2}\left(x-\frac{3a+b}{4}\right)^2+\frac{(b-a)^2}{96}\right]
\end{align}
for all $x\in[a,\frac{a+b}{2}]$, where $E (X)$ is the expectation of $X$.
\end{theorem}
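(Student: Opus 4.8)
The plan is to mirror exactly the argument already used for Theorem \ref{th4.1}, but now starting from inequality \eqref{2.8} of Theorem \ref{th2.2} instead of from \eqref{2.4}. The key point is that the left-hand side of \eqref{2.8} is a statement about an arbitrary function $g$ whose first derivative is absolutely continuous and whose second derivative satisfies $\gamma\le g''\le\Gamma$; the entire proof consists of specializing $g=F$, the cumulative distribution function, and then translating the resulting quantities into the language of the density $f$ and the expectation $E(X)$.

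First I would check that $F$ meets the hypotheses of Theorem \ref{th2.2}. Since $f$ is the probability density function, $F'=f$ almost everywhere, so the requirement that $F'$ be absolutely continuous with $F''\in L^1[a,b]$ and $\gamma\le F''\le\Gamma$ is precisely what is meant by ``the assumptions of Theorem \ref{th2.2}'' as transcribed onto $f$ and $f'$. This identification $F'=f$ is also what converts the term $\left(x-\frac{3a+b}{4}\right)\frac{F'(x)-F'(a+b-x)}{2}$ coming from \eqref{2.8} into the term $\left(x-\frac{3a+b}{4}\right)\frac{f(x)-f(a+b-x)}{2}$ that appears in \eqref{4.3}.

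Next I would substitute $g=F$ into \eqref{2.8} and rewrite the integral mean by means of the standard identity $E(X)=\int_a^b t\,dF(t)=b-\int_a^b F(t)\,dt$, obtained by integration by parts, which gives
\[
\frac{1}{b-a}\int_a^b F(t)\,dt=\frac{b-E(X)}{b-a}.
\]
Replacing $\frac{1}{b-a}\int_a^b g(t)\,dt$ by this expression inside \eqref{2.8} turns its left-hand side into exactly the quantity estimated in \eqref{4.3}, while the right-hand side $\frac{\Gamma-\gamma}{2}\left[\frac{1}{2}\left(x-\frac{3a+b}{4}\right)^2+\frac{(b-a)^2}{96}\right]$ is carried over unchanged.

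There is essentially no obstacle: the whole content is the observation that $F''=f'$ is bounded between $\gamma$ and $\Gamma$, so that Theorem \ref{th2.2} applies verbatim to $F$, combined with the expectation identity above. The only point deserving a line of care is confirming that the factor $x-\frac{3a+b}{4}$ multiplies $\frac{f(x)-f(a+b-x)}{2}$ rather than $\frac{f'(x)-f'(a+b-x)}{2}$, a shift in order of differentiation forced by the single relation $F'=f$ linking the distribution function to its density.
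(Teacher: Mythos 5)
Your proposal is correct and follows exactly the paper's own argument: apply inequality \eqref{2.8} with $F$ in place of $f$, use $E(X)=\int_a^b t\,dF(t)=b-\int_a^b F(t)\,dt$ to rewrite the integral mean, and note that $F'=f$ shifts the derivative terms down by one order. Your extra remark verifying that the hypotheses of Theorem \ref{th2.2} transfer to $F$ (via $F''=f'$) is a point the paper leaves implicit, but the route is the same.
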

\begin{proof}  By \eqref{2.8}  on choosing $f = F$ and taking into account
$$E(X)=\int_a^b t dF(t)=b-\int_a^b F(t)dt,$$
we obtain \eqref{4.3}.
\end{proof}

\begin{corollary}
Under the  assumptions of Theorem \ref{th4.2} with $x=\frac{3a+b}{4}$, we have
\begin{align}
 \left|\frac{1}{2}\left[F\left(\frac{3a+b}{4}\right)+F\left(\frac{a+3b}{4}\right)\right]+\frac{\Gamma+\gamma}{192}(b-a)^2-\frac{b-E(x)}{b-a}\right|
\leq \frac{\Gamma-\gamma}{192}(b-a)^2.
\end{align}
\end{corollary}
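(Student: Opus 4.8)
The plan is to obtain this corollary as the direct specialization of Theorem \ref{th4.2} at the single point $x=\frac{3a+b}{4}$, so no new analytic input is needed; the work is purely the arithmetic simplification of the three $x$-dependent pieces appearing in \eqref{4.3}.

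First I would substitute $x=\frac{3a+b}{4}$ into the general inequality \eqref{4.3}. The key observation is that the linear factor $x-\frac{3a+b}{4}$ vanishes at this point, so the middle term $\bigl(x-\frac{3a+b}{4}\bigr)\frac{f(x)-f(a+b-x)}{2}$ drops out entirely; this is precisely why $x=\frac{3a+b}{4}$ is the natural centre of the companion rule. At the same time I would record the reflected evaluation point $a+b-x=\frac{a+3b}{4}$, so that the symmetric average $\frac{1}{2}[F(x)+F(a+b-x)]$ becomes $\frac{1}{2}\bigl[F(\frac{3a+b}{4})+F(\frac{a+3b}{4})\bigr]$, which is the leading expression on the left-hand side of the claimed bound.

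Next I would simplify the common bracket $\frac{1}{2}\bigl(x-\frac{3a+b}{4}\bigr)^2+\frac{(b-a)^2}{96}$, which reduces to $\frac{(b-a)^2}{96}$ once the squared term is annihilated. Feeding this into the additive perturbation gives $\frac{\Gamma+\gamma}{2}\cdot\frac{(b-a)^2}{96}=\frac{\Gamma+\gamma}{192}(b-a)^2$ on the left, while feeding it into the right-hand bound gives $\frac{\Gamma-\gamma}{2}\cdot\frac{(b-a)^2}{96}=\frac{\Gamma-\gamma}{192}(b-a)^2$, which matches the stated inequality exactly. Since $\frac{3a+b}{4}$ lies in $[a,\frac{a+b}{2}]$, the hypothesis of Theorem \ref{th4.2} is satisfied and the substitution is legitimate. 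There is essentially no obstacle here beyond bookkeeping; the only points demanding a moment's care are verifying $a+b-x=\frac{a+3b}{4}$ and the arithmetic $\frac{1}{2}\cdot\frac{1}{96}=\frac{1}{192}$ in both the perturbation term and the error bound.
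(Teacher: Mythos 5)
Your proposal is correct and is exactly the paper's (implicit) argument: the corollary is stated without proof as the direct specialization of \eqref{4.3} at $x=\frac{3a+b}{4}$, where the factor $x-\frac{3a+b}{4}$ kills the middle term, $a+b-x=\frac{a+3b}{4}$, and the bracket collapses to $\frac{(b-a)^2}{96}$, yielding $\frac{\Gamma+\gamma}{192}(b-a)^2$ and $\frac{\Gamma-\gamma}{192}(b-a)^2$ as you computed. Nothing is missing; your bookkeeping, including the check that $\frac{3a+b}{4}\in\left[a,\frac{a+b}{2}\right]$, is complete.
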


\begin{theorem}\label{th4.3}
With the assumptions of Theorem \ref{th2.3}, we have
\begin{align}\label{4.5}
&\left|\frac{1}{2}[F(x)+F(a+b-x)]-\left(x-\frac{3a+b}{4}\right)\frac{f'(x)-f'(a+b-x)}{2}\right.\nonumber\\
&\left.+\frac{f'(b)-f'(a)}{b-a}\left[\frac{1}{2}\left(x-\frac{3a+b}{4}\right)^2+\frac{(b-a)^2}{96}\right]-\frac{b-E(X)}{b-a}\right|\nonumber\\
\leq & (S-\gamma)\left[\frac{(b-a)^2}{48}+ \frac{b-a}{4}\left|x-\frac{3a+b}{4}\right|\right]
\end{align}
and
\begin{align}\label{4.6}
&\left|\frac{1}{2}[F(x)+F(a+b-x)]-\left(x-\frac{3a+b}{4}\right)\frac{f'(x)-f'(a+b-x)}{2}\right.\nonumber\\
&\left.+\frac{f'(b)-f'(a)}{b-a}\left[\frac{1}{2}\left(x-\frac{3a+b}{4}\right)^2+\frac{(b-a)^2}{96}\right]-\frac{b-E(X)}{b-a}\right|\nonumber\\
\leq & (\Gamma-S)\left[\frac{(b-a)^2}{48}+ \frac{b-a}{4}\left|x-\frac{3a+b}{4}\right|\right]
\end{align}
for all $x\in[a,\frac{a+b}{2}]$, where $E (X)$ is the expectation of $X$.
\end{theorem}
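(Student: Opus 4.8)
The plan is to proceed exactly as in the proofs of Theorems \ref{th4.1} and \ref{th4.2}, namely to specialize the two-sided bound of Theorem \ref{th2.3} to the choice $f=F$, where $F$ is the cumulative distribution function of $X$. Since the density carries the regularity demanded of the ``second derivative'' in Theorem \ref{th2.3}, the hypotheses transfer under the identification $F'=f$ and $F''=f'$: the conditions ``$f'$ absolutely continuous, $f''\in L^1[a,b]$ and $\gamma\le f''\le\Gamma$'' of Theorem \ref{th2.3}, read for $F$, are precisely the regularity and boundedness assumptions collected under ``the assumptions of Theorem \ref{th2.3}'' here. Hence both inequalities \eqref{2.14} and \eqref{2.15} are at our disposal with $F$ in the role of $f$.

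The one genuine computation is the conversion of the integral mean of $F$ into an expectation. First I would integrate by parts,
\[
E(X)=\int_a^b t\,dF(t)=\bigl[tF(t)\bigr]_a^b-\int_a^b F(t)\,dt=b-\int_a^b F(t)\,dt,
\]
using $F(a)=0$ and $F(b)=1$ for a density supported on $[a,b]$. Rearranging gives $\frac{1}{b-a}\int_a^b F(t)\,dt=\frac{b-E(X)}{b-a}$, which is exactly the quantity that appears on the left-hand sides of \eqref{4.5} and \eqref{4.6} in place of the integral mean $\frac{1}{b-a}\int_a^b f(t)\,dt$ of Theorem \ref{th2.3}.

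Substituting $f=F$ into \eqref{2.14} and \eqref{2.15} and then replacing $\frac{1}{b-a}\int_a^b F(t)\,dt$ by $\frac{b-E(X)}{b-a}$ reproduces \eqref{4.5} and \eqref{4.6}, the right-hand sides $(S-\gamma)\left[\frac{(b-a)^2}{48}+\frac{b-a}{4}\left|x-\frac{3a+b}{4}\right|\right]$ and $(\Gamma-S)\left[\frac{(b-a)^2}{48}+\frac{b-a}{4}\left|x-\frac{3a+b}{4}\right|\right]$ being carried over unchanged. I do not expect a real obstacle here: the estimate is a direct corollary of Theorem \ref{th2.3}, and the only points requiring care are the integration-by-parts identity above and the bookkeeping induced by the notational shift $F'=f$, so that the derivative terms of Theorem \ref{th2.3} are correctly matched against the density appearing in the displayed inequalities.
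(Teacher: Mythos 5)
Your proposal is correct and follows exactly the paper's own argument: the paper likewise obtains \eqref{4.5} and \eqref{4.6} by setting $f=F$ in \eqref{2.14} and \eqref{2.15} and using the integration-by-parts identity $E(X)=b-\int_a^b F(t)\,dt$. Your additional remarks on transferring the hypotheses under $F'=f$, $F''=f'$ and on matching the derivative terms against the density are sound and, if anything, slightly more careful than the paper's one-line proof.
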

\begin{proof}  By \eqref{2.14} and \eqref{2.15}  on choosing $f = F$ and taking into account
$$E(X)=\int_a^b t dF(t)=b-\int_a^b F(t)dt,$$
we obtain \eqref{4.5} and \eqref{4.6}.
\end{proof}

\begin{corollary}
Under the  assumptions of Theorem \ref{th4.3} with $x=\frac{3a+b}{4}$, we have
\begin{align}
 \left|\frac{1}{2}\left[F\left(\frac{3a+b}{4}\right)+F\left(\frac{a+3b}{4}\right)\right]+\frac{b-a}{96}[f'(b)-f'(a)]-\frac{b-E(x)}{b-a}\right|
\leq \frac{1}{48}(S-\gamma)(b-a)^2
\end{align}
and
\begin{align}
 \left|\frac{1}{2}\left[F\left(\frac{3a+b}{4}\right)+F\left(\frac{a+3b}{4}\right)\right]+\frac{b-a}{96}[f'(b)-f'(a)]-\frac{b-E(x)}{b-a}\right|
\leq \frac{1}{48}(\Gamma-S)(b-a)^2.
\end{align}
\end{corollary}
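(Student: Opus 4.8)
The plan is to obtain both inequalities as the direct specialization of Theorem \ref{th4.3} at the node $x=\frac{3a+b}{4}$, since this particular choice annihilates the term carrying the factor $x-\frac{3a+b}{4}$ and collapses the right-hand side of \eqref{4.5} and \eqref{4.6} to a single constant multiple of $(b-a)^2$.

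First I would set $x=\frac{3a+b}{4}$ in \eqref{4.5} and \eqref{4.6}. With this choice one has $x-\frac{3a+b}{4}=0$, so the summand $\left(x-\frac{3a+b}{4}\right)\frac{f'(x)-f'(a+b-x)}{2}$ vanishes identically, and the companion node becomes $a+b-x=\frac{a+3b}{4}$. Consequently the bracketed quantity $\frac{1}{2}\left(x-\frac{3a+b}{4}\right)^2+\frac{(b-a)^2}{96}$ reduces to $\frac{(b-a)^2}{96}$, so that the perturbation term simplifies as
\begin{equation*}
\frac{f'(b)-f'(a)}{b-a}\cdot\frac{(b-a)^2}{96}=\frac{b-a}{96}\bigl[f'(b)-f'(a)\bigr],
\end{equation*}
which is exactly the form appearing in the statement of the corollary.

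For the bounds, the same substitution gives $\left|x-\frac{3a+b}{4}\right|=0$, whence
\begin{equation*}
\frac{(b-a)^2}{48}+\frac{b-a}{4}\left|x-\frac{3a+b}{4}\right|=\frac{(b-a)^2}{48},
\end{equation*}
and multiplying through by $(S-\gamma)$ and by $(\Gamma-S)$ respectively yields the two claimed estimates $\frac{1}{48}(S-\gamma)(b-a)^2$ and $\frac{1}{48}(\Gamma-S)(b-a)^2$. I do not anticipate any genuine obstacle here: the corollary is a routine evaluation of Theorem \ref{th4.3} at its most symmetric interior point, where the correction term disappears. The only thing to verify is the elementary arithmetic of the two simplifications displayed above, both of which follow at once from the identity $x-\frac{3a+b}{4}=0$.
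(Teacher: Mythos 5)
Your proposal is correct and coincides with the paper's (implicit) argument: the corollary is obtained exactly by setting $x=\frac{3a+b}{4}$ in \eqref{4.5} and \eqref{4.6}, which kills the $\left(x-\frac{3a+b}{4}\right)$ term and reduces both the perturbation bracket and the bound to the stated constants, just as in the analogous specializations \eqref{2.23} and \eqref{2.24} of Theorem \ref{th2.3}. Your arithmetic checks out, so nothing further is needed.
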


\begin{theorem}\label{th4.4}
With the assumptions of Theorem \ref{th2.4}, we have
\begin{align}\label{4.7}
&\left|\frac{1}{2}[F(x)+F(a+b-x)]-\left(x-\frac{3a+b}{4}\right)\frac{f'(x)-f'(a+b-x)}{2}\right.\nonumber\\
&\left.+\frac{f'(b)-f'(a)}{b-a}\left[\frac{1}{2}\left(x-\frac{3a+b}{4}\right)^2+\frac{(b-a)^2}{96}\right]-\frac{b-E(X)}{b-a}\right|\nonumber\\
\leq&\frac{1}{\pi}\|f'''\|_{2}\left\{\frac{1}{320}(a+b-2x)^5+\frac{1}{10}(x-a)^5-(b-a)\left[\frac{1}{2}\left(x-\frac{3a+b}{4}\right)^2+\frac{(b-a)^2}{96}
\right]^2\right\}^{1/2}
\end{align}
for all $x\in[a,\frac{a+b}{2}]$, where $E (X)$ is the expectation of $X$.
\end{theorem}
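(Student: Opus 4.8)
The plan is to derive \eqref{4.7} as a direct specialization of inequality \eqref{2.29}, in exactly the manner that Theorems \ref{th4.1}--\ref{th4.3} follow from their companion inequalities in Section 2. First I would apply Theorem \ref{th2.4} with the mapping there taken to be the cumulative distribution function $F$ rather than a generic function. Under the standing assumptions on the density $f$, the function $F$ is smooth enough to satisfy the hypotheses of Theorem \ref{th2.4}, and the relation $F'=f$ lets each first-derivative occurrence of $F$ in \eqref{2.29} be rewritten in terms of the density, while the evaluations $F(x)$ and $F(a+b-x)$ appear directly.

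The only computation of substance is the conversion of the integral mean of $F$ into an expectation. Integrating by parts and using $F(a)=0$, $F(b)=1$,
$$E(X)=\int_a^b t\,dF(t)=\bigl[\,tF(t)\,\bigr]_a^b-\int_a^b F(t)\,dt=b-\int_a^b F(t)\,dt,$$
so that
$$\frac{1}{b-a}\int_a^b F(t)\,dt=\frac{b-E(X)}{b-a}.$$
Substituting this identity into the left-hand side of \eqref{2.29} replaces the term $\frac{1}{b-a}\int_a^b F(t)\,dt$ by $\frac{b-E(X)}{b-a}$, while the boundary contribution $F'(b)-F'(a)$ and the point evaluations carry over unchanged, and the right-hand side of \eqref{2.29} transfers under the substitution $f=F$.

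I expect no genuine obstacle here: the argument is pure bookkeeping once the expectation identity is in hand. The single point demanding care is the notational correspondence, since in this section the symbol $f$ denotes the probability density while the role played by the mapping in Theorem \ref{th2.4} is taken by $F$; keeping track of which derivative of $F$ matches which object built from $f$ is precisely what guarantees that the substituted left-hand side reproduces the expression displayed in \eqref{4.7}.
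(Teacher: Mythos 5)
Your proposal is correct and coincides with the paper's own proof, which likewise specializes \eqref{2.29} to $f=F$ and invokes the integration-by-parts identity $E(X)=b-\int_a^b F(t)\,dt$. You even flag the one genuinely delicate point, the substitution $F'=f$ matching derivatives of $F$ to objects built from the density, which the paper passes over silently.
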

\begin{proof}  By \eqref{2.29}  on choosing $f = F$ and taking into account
$$E(X)=\int_a^b t dF(t)=b-\int_a^b F(t)dt,$$
we obtain \eqref{4.7}.
\end{proof}

\begin{corollary}
Under the  assumptions of Theorem \ref{th4.4} with $x=\frac{3a+b}{4}$, we have
\begin{align}
 \left|\frac{1}{2}\left[F\left(\frac{3a+b}{4}\right)+F\left(\frac{a+3b}{4}\right)\right]+\frac{b-a}{96}[f'(b)-f'(a)]-\frac{b-E(x)}{b-a}\right|
\leq\frac{(b-a)^{5/2}}{48\pi\sqrt{5}}\|f'''\|_{2}.
\end{align}
\end{corollary}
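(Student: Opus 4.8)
The plan is to obtain this corollary as the direct specialization of Theorem \ref{th4.4} at the interior sampling point $x=\frac{3a+b}{4}$: one sets $x=\frac{3a+b}{4}$ in inequality \eqref{4.7} and carries out two short simplifications, one on each side. This is the cumulative-distribution analogue of the passage from Theorem \ref{th2.4} to its corollary \eqref{2.33}, and since the expectation identity $\int_a^b F(t)\,dt=b-E(X)$ is already incorporated into \eqref{4.7}, no new probabilistic input is needed.

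First I would simplify the left-hand side of \eqref{4.7}. Setting $x=\frac{3a+b}{4}$ gives $a+b-x=\frac{a+3b}{4}$, so the leading term becomes $\frac12\bigl[F(\frac{3a+b}{4})+F(\frac{a+3b}{4})\bigr]$. The factor $x-\frac{3a+b}{4}$ vanishes, so the term carrying $\frac{f'(x)-f'(a+b-x)}{2}$ drops out; and the bracket $\frac12(x-\frac{3a+b}{4})^2+\frac{(b-a)^2}{96}$ collapses to $\frac{(b-a)^2}{96}$, so the correction term reduces to $\frac{f'(b)-f'(a)}{b-a}\cdot\frac{(b-a)^2}{96}=\frac{b-a}{96}[f'(b)-f'(a)]$. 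Together with the unchanged $-\frac{b-E(X)}{b-a}$, this reproduces exactly the expression inside the absolute value in the statement.

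Next I would evaluate the radicand on the right-hand side of \eqref{4.7}. At $x=\frac{3a+b}{4}$ one has $a+b-2x=\frac{b-a}{2}$ and $x-a=\frac{b-a}{4}$, whence $(a+b-2x)^5=\frac{(b-a)^5}{32}$, $(x-a)^5=\frac{(b-a)^5}{1024}$, and $(b-a)\bigl[\frac{(b-a)^2}{96}\bigr]^2=\frac{(b-a)^5}{9216}$. The three contributions combine as $\bigl(\frac{1}{320}\cdot\frac{1}{32}+\frac{1}{10}\cdot\frac{1}{1024}-\frac{1}{9216}\bigr)(b-a)^5=\frac{(b-a)^5}{11520}$; taking the square root and using $\sqrt{11520}=48\sqrt5$ turns the braced factor into $\frac{(b-a)^{5/2}}{48\sqrt5}$, so the bound becomes $\frac{(b-a)^{5/2}}{48\pi\sqrt5}\|f'''\|_2$, as claimed. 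This is the same reduction already performed in deriving \eqref{2.33} from Theorem \ref{th2.4}, so in the final write-up I would simply quote it.

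The only genuinely computational step, and hence the place where an arithmetic slip is most likely, is this last combination of fractions: one must confirm that $\frac{1}{10240}+\frac{1}{10240}-\frac{1}{9216}=\frac{1}{11520}$ (a common-denominator check against $46080$) and that $11520=2^8\cdot45$, so that $\sqrt{11520}=48\sqrt5$. Everything else is a matter of substituting $x=\frac{3a+b}{4}$ and reading off the vanishing or collapsing terms, so no structural difficulty arises.
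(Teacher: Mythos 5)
Your proposal is correct and coincides with the paper's (implicit) argument: the corollary is obtained simply by setting $x=\frac{3a+b}{4}$ in \eqref{4.7}, exactly as \eqref{2.33} is obtained from Theorem \ref{th2.4}, and your arithmetic ($\frac{1}{10240}+\frac{1}{10240}-\frac{1}{9216}=\frac{1}{11520}$, $\sqrt{11520}=48\sqrt{5}$) checks out.
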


\begin{theorem}\label{th4.5}
With the assumptions of Theorem \ref{th2.5}, we have
\begin{align}\label{4.11}
&\left|\frac{1}{2}[F(x)+F(a+b-x)]-\left(x-\frac{3a+b}{4}\right)\frac{f'(x)-f'(a+b-x)}{2}\right.\nonumber\\
&\left.+\frac{f'(b)-f'(a)}{b-a}\left[\frac{1}{2}\left(x-\frac{3a+b}{4}\right)^2+\frac{(b-a)^2}{96}\right]-\frac{b-E(X)}{b-a}\right|\nonumber\\
\leq&\frac{\sqrt{\sigma(f'')}}{b-a}\left\{\frac{1}{320}(a+b-2x)^5+\frac{1}{10}(x-a)^5-(b-a)\left[\frac{1}{2}\left(x-\frac{3a+b}{4}\right)^2
+\frac{(b-a)^2}{96}\right]^2\right\}^{1/2}
\end{align}
for all $x\in[a,\frac{a+b}{2}]$, where $E (X)$ is the expectation of $X$.
\end{theorem}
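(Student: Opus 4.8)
The plan is to derive \eqref{4.11} directly from the master inequality \eqref{2.36} of Theorem \ref{th2.5} by specializing the generic function there to the cumulative distribution function $F$, exactly as was done in the proofs of Theorems \ref{th4.1}--\ref{th4.4}. First I would verify that $F$ meets the hypotheses of Theorem \ref{th2.5}: since the density $f$ plays the role of $F'$, the standing assumption is that $F'=f$ is absolutely continuous and $F''=f'\in L^2[a,b]$, so that \eqref{2.36} applies verbatim to $F$ on $[a,b]$. Under this substitution each occurrence of the generic $f,f',f''$ in \eqref{2.36} is read as $F,\,F'=f,\,F''=f'$, and in particular the average $\frac{1}{b-a}\int_a^b f(t)\,dt$ becomes $\frac{1}{b-a}\int_a^b F(t)\,dt$.

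The single nontrivial ingredient is to rewrite this last integral in terms of the expectation. Integrating by parts and using $F(a)=0$, $F(b)=1$ gives
\begin{equation*}
E(X)=\int_a^b t\,dF(t)=\bigl[tF(t)\bigr]_a^b-\int_a^b F(t)\,dt=b-\int_a^b F(t)\,dt,
\end{equation*}
so that $\frac{1}{b-a}\int_a^b F(t)\,dt=\frac{b-E(X)}{b-a}$. Substituting this identity into the specialized form of \eqref{2.36} replaces the integral average by the term $\frac{b-E(X)}{b-a}$ appearing in \eqref{4.11}, while the remaining boundary and derivative terms, together with the right-hand side $\frac{\sqrt{\sigma(f'')}}{b-a}\{\cdots\}^{1/2}$, carry over unchanged. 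This yields \eqref{4.11}.

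I do not expect a genuine obstacle here, as the argument is a direct transcription of Theorem \ref{th2.5}. The only points requiring care are bookkeeping ones: checking that the regularity imposed on the density $f$ is exactly what is needed for $F$ to satisfy the hypotheses of Theorem \ref{th2.5}, and tracking that the derivatives of the generic function become the density and its derivative after the substitution $f\mapsto F$. Once the integration-by-parts identity for $E(X)$ is in hand, \eqref{4.11} follows immediately.
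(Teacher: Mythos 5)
Your proposal is correct and follows exactly the paper's own proof: apply \eqref{2.36} with $f$ replaced by $F$ and use the integration-by-parts identity $E(X)=b-\int_a^b F(t)\,dt$ to convert the integral average into $\frac{b-E(X)}{b-a}$. The only difference is that you spell out the hypothesis-checking and bookkeeping that the paper leaves implicit.
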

\begin{proof}  By \eqref{2.36}  on choosing $f = F$ and taking into account
$$E(X)=\int_a^b t dF(t)=b-\int_a^b F(t)dt,$$
we obtain \eqref{4.11}.
\end{proof}

\begin{corollary}
Under the  assumptions of Theorem \ref{th4.5} with $x=\frac{3a+b}{4}$, we have
\begin{align}
 \left|\frac{1}{2}\left[F\left(\frac{3a+b}{4}\right)+F\left(\frac{a+3b}{4}\right)\right]+\frac{b-a}{96}[f'(b)-f'(a)]-\frac{b-E(x)}{b-a}\right|
\leq\frac{(b-a)^{3/2}}{48\sqrt{5}}\sqrt{\sigma(f'')}.
\end{align}
\end{corollary}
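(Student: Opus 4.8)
The plan is to obtain \eqref{4.11} as a direct specialization of Theorem~\ref{th2.5}, in the same manner that Theorems~\ref{th4.1}--\ref{th4.4} were deduced from their analytic counterparts. The first step is to apply inequality~\eqref{2.36} with the cumulative distribution function $F$ substituted for the function appearing there. Since $F$ is absolutely continuous on $[a,b]$ with $F'=f$ equal to the density and $F''\in L^2[a,b]$, the hypotheses of Theorem~\ref{th2.5} are met under the present assumptions, and the entire right-hand side of~\eqref{2.36} carries over unchanged into that of~\eqref{4.11}, the functional $\sigma$ now being evaluated on the second derivative of $F$.

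The only computation of substance is the treatment of the mean-value term. Under the substitution, the integral $\frac{1}{b-a}\int_a^b f(t)\,dt$ occurring in~\eqref{2.36} becomes $\frac{1}{b-a}\int_a^b F(t)\,dt$, and I would then invoke the elementary identity
$$E(X)=\int_a^b t\,dF(t)=\bigl[\,tF(t)\,\bigr]_a^b-\int_a^b F(t)\,dt=b-\int_a^b F(t)\,dt,$$
which holds because $F(a)=0$ and $F(b)=1$ for a distribution function supported on $[a,b]$. Solving for the integral yields $\int_a^b F(t)\,dt=b-E(X)$, so this term equals exactly $-\frac{b-E(X)}{b-a}$, reproducing the corresponding term on the left of~\eqref{4.11}. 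The remaining point-value and boundary contributions pass over verbatim once expressed through $F$ and its derivatives.

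I expect no genuine obstacle here: the proof reduces to a single substitution together with one integration by parts, exactly as in the proofs of the preceding theorems of this section. The only point demanding attention is keeping the notation consistent after the replacement, so that the derivative terms in~\eqref{4.11} are understood as derivatives of $F$; this bookkeeping is automatic and requires no estimate beyond those already established in Theorem~\ref{th2.5}.
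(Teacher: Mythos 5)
Your substitution argument—putting $f=F$ into \eqref{2.36} and using the integration by parts $E(X)=\int_a^b t\,dF(t)=b-\int_a^b F(t)\,dt$—is exactly how the paper deduces Theorem \ref{th4.5}, and that part of your proposal is sound (your remark that the derivative terms must then be read as derivatives of $F$, i.e.\ $F'=f$, $F''=f'$ and $\sigma$ evaluated on $F''$, is well taken). But the statement you were asked to prove is the corollary, namely \eqref{4.11} specialized to $x=\frac{3a+b}{4}$, and your proposal stops one step short: it never performs that specialization, and in particular never derives the constant $\frac{1}{48\sqrt{5}}$ appearing in the displayed bound. That evaluation is the only content the corollary adds to the theorem, so omitting it leaves the claim unproved.

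The missing step is routine but concrete. At $x=\frac{3a+b}{4}$ one has $x-\frac{3a+b}{4}=0$, so the $f'(x)-f'(a+b-x)$ term drops and the bracket reduces to $\frac{(b-a)^2}{96}$; moreover $a+b-2x=\frac{b-a}{2}$ and $x-a=\frac{b-a}{4}$, whence
\begin{equation*}
\frac{1}{320}\Bigl(\frac{b-a}{2}\Bigr)^5+\frac{1}{10}\Bigl(\frac{b-a}{4}\Bigr)^5-(b-a)\Bigl(\frac{(b-a)^2}{96}\Bigr)^2
=\Bigl(\frac{1}{5120}-\frac{1}{9216}\Bigr)(b-a)^5=\frac{(b-a)^5}{11520},
\end{equation*}
and since $\sqrt{11520}=48\sqrt{5}$ the right-hand side of \eqref{4.11} becomes $\frac{\sqrt{\sigma(f'')}}{b-a}\cdot\frac{(b-a)^{5/2}}{48\sqrt{5}}=\frac{(b-a)^{3/2}}{48\sqrt{5}}\sqrt{\sigma(f'')}$, as claimed. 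Alternatively, you could bypass this arithmetic entirely by applying the already-stated corollary \eqref{2.38} of Theorem \ref{th2.5} with $f=F$, where the constant is already in simplified form; either way, some version of this final step must appear for the argument to reach the stated inequality.
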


\subsection*{Acknowledgments}
This work was partly supported by the National Natural Science Foundation
of China (Grant No. 40975002) and the Natural Science Foundation of the Jiangsu
Higher Education Institutions (Grant No. 09KJB110005).

\end{document}